\def\changed#1{{\color{black}#1}}
\newif\iffull
\newtheorem{theorem}{Theorem}
\newtheorem{corollary}{Corollary}
\newtheorem{problem}{Problem}
\newtheorem{claim}{Claim}
\newtheorem{lemma}{Lemma}
\newcommand{\student}[1]{}
\newcommand{\postdoc}[1]{}
\renewcommand{\subsubsection}[1]{\paragraph{#1}}
\newcommand{\leaveout}[1]{}
\renewcommand{\@fnsymbol}[1]{%
  \ifcase#1
  \or §
  \or ¶
  \or *
  \else \@ctrerr
  \fi
}
\date{}
\title{On the $d$-independence number in 1-planar graphs} 
\author{Therese Biedl%
\thanks{David R.~Cheriton School of Computer
Science, University of Waterloo, Waterloo, Ontario N2L 3G1, Canada.
Work of TB supported by NSERC\changed{; FRN RGPIN-2020-03958}.  {\tt biedl@uwaterloo.ca} 
}
\and Prosenjit Bose\thanks{School of Computer Science,  Carleton University, Ottawa, Canada. Research supported in part by NSERC. {\tt  jit@scs.carleton.ca}}
\and Babak Miraftab\thanks{School of Computer Science,  Carleton University, {\tt  babakmiraftab@cunet.carleton.ca}}
}
\begin{document}

\maketitle
\begin{abstract}
The \emph{$d$-independence number} of a graph $G$ is the largest possible size of an independent set $I$ in $G$
where each vertex of $I$ has degree at least $d$ in $G$.    Upper bounds for the $d$-independence number in planar
graphs are well-known for $d=3,4,5$, and can in fact be matched with constructions that actually have minimum degree $d$.   In
this paper, we explore the same questions for 1-planar graphs, i.e., graphs that can be drawn in the plane with at most
one crossing per edge.   We give upper bounds for the $d$-independence number for all $d$. Then we give
constructions that match the upper bound, and (for small $d$) also have minimum degree $d$.

\medskip\noindent{\bf Keywords: } 1-planar graph, independent set, minimum degree
\end{abstract}

\section{Introduction}

An \emph{independent set} in a graph contains vertices that are not adjacent to each other.
A \emph{maximum independent set} is an independent set of largest possible size for a given graph, and its cardinality is known as the \emph{independence number} of $G$ and denoted by $\alpha(G)$.
The celebrated 4-color theorem~\cite{appel89, Rob97} immediately implies that $\alpha(G)\geq n/4$ for every planar graph $G$, where $n$ is the number of vertices in the graph. Interestingly, this bound represents the maximum attainable, as there exist planar graphs without larger independent sets; for instance, consider disjoint copies of complete graphs with $4$ vertices. 
Some weaker lower bounds are also established \cite{ALBERTSON197684,CranstonR16a} that circumvent the complexity of the 4-color theorem (as suggested by Erd\H{o}s \cite{Berge1970}) via charging and discharging argument.

From an algorithmic standpoint, determining the maximum independent set in planar graphs is \NP-hard, even when restricted to planar graphs of maximum degree 3, see \cite{GareyJS76,Mohar01} or planar triangle-free graphs, see \cite{Madhavan84}. 
Consequently, efforts have shifted towards finding large independent sets through methods like approximation algorithms \cite{AlekseevLMM08,Baker94,Burns89, ChibaNS83,DvorakM17, Madhavan84}, parallel algorithms \cite{ChrobakN91,DadounK90,He88}, 
fixed-parameter tractable algorithms
\cite{fomin2024hamiltonicity,lokshantov2014independent,MR4568165,MR2922071}
or within certain minor-free planar graphs \cite{DvorakM17,LozinM10,MagenM09}.

The size $\alpha(G)$ serves as a crucial parameter in graph theory and holds significance in algorithmic contexts. 
For instance, Kirkpatrick~\cite{Kirk83} and Dobkin and Kirkpatrick~\cite{Dob85} employed the repeated removal of independent sets from triangulations to devise data structures for efficient point location and distance computation between convex polytopes, respectively.     For this technique, it is important that the vertices in the independent set have bounded degrees.
Biedl and Wilkinson~\cite{BW05} further explored the size of bounded degree independent sets in triangulations.
In addition, Bose,  Dujmović and Wood \cite{MR2213869} studied how to obtain graphs with large independent sets of bounded degrees in graphs of bounded treewidth.

In the above context, `bounded degree independent set' meant that every vertex of the independent set should have degree \emph{at most} $d$, for some specified constant $d$.    In contrast to this, we study here independent sets where every vertex of the independent set should have degree \emph{at least} $d$.    We call this a \emph{$d$-independent set}%
\footnote{The term `$d$-independent set' is heavily overloaded in the literature; for example, it has also been used for an independent set of cardinality $d$ \cite{TwinwidthIII} or for an independent set that induces a graph of maximum degree $d$ \cite{kIndSurvey}.   Unfortunately, more descriptive notations such as `$(\deg \geq d)$-independent set' are rather difficult to parse.
}
and the maximum size of a $d$-independent set in $G$ is called the \emph{$d$-independence number} of $G$ and denoted $\alpha_d(G)$.    We also speak of the independence number of a graph class $\mathcal{G}$ (such as `the 1-planar graphs'), which is a function that depends on $n$ and denotes the maximum of $\alpha_d(G)$ over all graphs $G$ with $n$ vertices in~$\mathcal{G}$.

The $d$-independence number for planar graphs was first studied by 
Caro and Roditty in \cite{MR0824858} (they actually studied graphs of minimum
degree $d$, which means that any independent set is a $d$-independent set).
They showed that any independent set in a simple planar graph $G$ with minimum
degree $d$ has size at most $\tfrac{2n-4}{d}$.
%
They also showed that these bounds are tight for $2\leq d\leq 5$
by constructing infinite families of simpler planar graphs with minimum degree $d$
and an independent set of size $\tfrac{2n-4}{d}$.   

There are various generalizations of planar graphs,  for example, a \emph{$1$-planar} graph is a graph that can be drawn in the Euclidean plane with at most one crossing per edge.  In this paper, we study the independence number of $1$-planar graphs.
Borodin \cite{MR0832128} establishes that every $1$-planar graph $G$ has chromatic number at most $6$, therefore $\alpha(G)\geq n/6$. This is tight; for example, a graph consisting of disjoint copies of $K_6$ is $1$-planar and has chromatic number $6$.
Unlike planar graphs, to the best of our knowledge, there are no prior results\footnote{Preliminary version appeared in \cite{biedl2024independence}.}
on the upper bound for the $d$-independence number of $1$-planar graphs.
Motivated by the work by Caro and Roditty, we ask the following:
\begin{problem}\label{pro_1}
    What is the $d$-independence number of 1-planar graphs?   And does it change if we additionally require the graphs to have minimum degree $d$? 
\end{problem}

In this paper, we address Problem 1 by 
giving upper bounds for the $d$-independence number of 1-planar graphs for $d\geq 3$.
(No smaller values of $d$ are worth studying, since $K_{2,n}$ has a 2-independent set
of size $n-2$, and so only trivial bounds of $n-O(1)$ can be shown.)
Specifically, we show that $\alpha_d(n)\leq \tfrac{4}{d+\lceil d/3 \rceil}(n-2)$ for $d\geq 3$,
and this bound holds not only for simple graphs but also in the presence of parallel
edges as long as (in some 1-planar drawing) there are no bigons.    For simple graphs
we can prove the stronger bound $\alpha_3(n)\leq \tfrac{6}{7}(n-2)$.   Then we construct
classes of 1-planar graphs that match these bounds (at least if we permit parallel edges);
for small values of $d$, we can construct the graphs such that they are simple and have minimum
degree $d$. 
A concise summary of our results is in Table~\ref{ta:overviewNew}.
\def\myapprox{\ensuremath{{\scriptstyle{\approx}}}}

\begin{table}[ht] \centering
\renewcommand{\arraystretch}{1.3}
\begin{tabular}{|@{}c|%
@{\,}c@{\,}|%
@{\,}c@{\,}|@{\,}c@{\,}|@{\,}c@{\,}|@{\,}c@{\,}|@{\,}c@{\,}|@{\,}c@{\,}|@{\,}c@{\,}|@{\,}c@{\,}|@{\,}c@{\,}|@{\,}c@{\,}|@{\,}c@{\,}|@{\,}c@{\,}|@{\,}c@{\,}|@{\,}c@{\,}|@{\,}c@{\,}|@{\,}c@{}|}
\hline
$d$ & $3$ & $4$ & $5$ & $6$ & $7$ & $8$ & $9$ & $10$ & $11$ & $12$ & $13$ & $14$ & $15$ & $16$ & $17$ & $18$ & ${\geq}19$ \\
\hline
\hline
$\alpha_d(G)/n \leq$ & 
$\myapprox \tfrac{6}{7}$ 
& $\myapprox \tfrac{2}{3}$ 
& $\myapprox \tfrac{4}{7}$ 
& $\myapprox \tfrac{1}{2}$ 
& $\myapprox \tfrac{2}{5}$ 
& $\myapprox \tfrac{4}{11}$ 
& $\myapprox \tfrac{1}{3}$ 
& $\myapprox \tfrac{2}{7}$ 
& $\myapprox \tfrac{4}{15}$ 
& $\myapprox \tfrac{1}{4}$ 
& $\myapprox \tfrac{2}{9}$ 
& $\myapprox \tfrac{4}{19}$ 
& $\myapprox \tfrac{1}{5}$ 
& $\myapprox \tfrac{2}{11}$ 
& $\myapprox \tfrac{4}{23}$ 
& $\myapprox \tfrac{1}{6}$ 
& $\myapprox \tfrac{4}{d+\lceil d/3 \rceil}$ 
\\[-1.2ex]
{\scriptsize ($G$ is bigon-free)} & {\tiny ($G$ simple)} & & & & & & & & & & & & & & & & \\
\hline
$\,\exists G: \alpha_d(G)/n \geq$ & 
$\myapprox \tfrac{6}{7}$ 
& $\myapprox \tfrac{2}{3}$ 
& $\myapprox \tfrac{4}{7}$ 
& $\myapprox \tfrac{1}{2}$ 
& $\myapprox \tfrac{2}{5}$ 
& $\myapprox \tfrac{1}{3}$ 
& $\myapprox \tfrac{2}{7}$ 
& $\myapprox \tfrac{1}{4}$ 
& $\myapprox \tfrac{3}{13}$ 
& $\myapprox \tfrac{3}{14}$ 
& $\myapprox \tfrac{1}{5}$ 
& $\myapprox \tfrac{2}{11}$ 
& $\myapprox \tfrac{1}{6}$ 
& $\myapprox \tfrac{3}{19}$ 
& $\myapprox \tfrac{3}{20}$ 
& $\myapprox \tfrac{1}{7}$ 
& $\myapprox\tfrac{2}{d-4}$ \\[-1.2ex]
{\scriptsize ($G$ is simple)} & & & & & & & & & & & & & & & & & \\
\hline
$\,\exists G: \alpha_d(G)/n \geq$ & 
$\myapprox 1$ 
& $\myapprox \tfrac{2}{3}$ 
& $\myapprox \tfrac{4}{7}$ 
& $\myapprox \tfrac{1}{2}$ 
& $\myapprox \tfrac{2}{5}$ 
& $\myapprox \tfrac{4}{11}$ 
& $\myapprox \tfrac{1}{3}$ 
& $\myapprox \tfrac{2}{7}$ 
& $\myapprox \tfrac{4}{15}$ 
& $\myapprox \tfrac{1}{4}$ 
& $\myapprox \tfrac{2}{9}$ 
& $\myapprox \tfrac{4}{19}$ 
& $\myapprox \tfrac{1}{5}$ 
& $\myapprox \tfrac{2}{11}$ 
& $\myapprox \tfrac{4}{23}$ 
& $\myapprox \tfrac{1}{6}$ 
& $\myapprox \tfrac{4}{d+\lceil d/3 \rceil}$ 
\\[-1.2ex]
{\scriptsize ($G$ is bigon-free)} & & & & & & & & & & & & & & & & & \\
\hline
\end{tabular}
\renewcommand{\arraystretch}{1}
\caption{Bounds on the $d$-independence number of 1-planar
graphs. `$\myapprox$' means lower-order terms are omitted.  }
\label{ta:overviewNew}
\label{ta:resultsNew}
\end{table}

Our constructions for $d=3,4,5$ actually happen to have minimum degree $d$,
so requiring the minimum degree of the graph to be $d$ makes no difference
for $d=3,4,5$.    What happens for $d=6,7$?  (A 1-planar
graph has at most $4n-8$ edges, and so cannot have a minimum degree 8 or more.)   
For $d=6$ we can modify our construction slightly to achieve a minimum degree 6 while weakening the lower bound by only a constant
term. For $d=7$ we give an entirely different construction that has a fairly
large independent set, but we leave a multiplicative gap to the upper bound, see Table~\ref{ta:overview}.

\def\myapprox{\ensuremath{{\scriptstyle{\approx}}}}
\begin{table}[ht]
\hspace*{\fill}
\renewcommand{\arraystretch}{1.3}
\begin{tabular}{|r|c|c|c|c|c|}
\hline
$\delta(G)=$ & $3$ & $4$ & $5$ & $6$ & $7$ \\
\hline
\hline
$\alpha_{\delta(G)}(G)\leq$ & $\tfrac{6}{7}(n-2)$ & $\tfrac{2}{3}(n-2)$ & $\tfrac{4}{7}(n-2)$ & 
$\lceil \tfrac{1}{2}(n-4) \rceil$
& $\tfrac{4}{10}(n-2)$ \\
\hline
$\exists G: \alpha_{\delta(G)} \geq$ 
& $\tfrac{6}{7}(n-2)$ & $\tfrac{2}{3}(n-2)$ & $\tfrac{4}{7}(n-2)$ & $\tfrac{1}{2}(n-4)$ & $\tfrac{8}{21}(n-13.5)$ \\
\hline
\end{tabular}
\renewcommand{\arraystretch}{1}
\hspace*{\fill}
\caption{Bounds on the $d$-independence number of $1$-planar graphs with minimum degree 
$\delta(G)=d$.   All constructed graphs are simple.
} 
\label{ta:overview}
\end{table}

We also study the special situation concerns \emph{optimal 1-planar} graphs, which
have the maximum-possible number $4n-8$ of edges.   (This is motivated by the observation that the lower-bound constructions of Caro and Roditty all use planar
graphs with the maximum possible number of edges, whereas our earlier construction usually does \emph{not} have the maximum number of edges.)
Here we can show that the independence number is at most $\tfrac{2}{d}(n-2)$, and this is tight
if we allow parallel edges (and, for small $d$, even for simple optimal 1-planar graphs).
Our results are summarized in \Cref{ta:optimal}.

\begin{table}[ht]
\hspace*{\fill}
\renewcommand{\arraystretch}{1.3}
\begin{tabular}{|c|c|c|c|c|c|}
\hline

$d$ & 6 & 8 & 10 & 12 & $\geq 14$ even \\
\hline
\hline
$\alpha_{\delta(G)}(G)\leq$ & $\tfrac{1}{3}(n-2)$ & $\tfrac{1}{4}(n-2)$ & $\tfrac{1}{5}(n-2)$ &  $\tfrac{1}{6}(n-2)$ & $\tfrac{2}{d}(n-2)$ \\[-0.8ex]
\scriptsize ($G$ is bigon-free) & & & & & \\
\hline
$\exists G: \alpha_{\delta(G)} \geq$ 
& $\tfrac{1}{3}(n-2)$ & $\tfrac{1}{4}(n-2)$ & $\tfrac{1}{5}(n-2)$ & $\tfrac{1}{6}(n-4)$ & $\tfrac{2}{d}(n-2)$ \\[-0.8ex]
& \small (simple)
& \small (simple)
& \small (simple)
& \small (simple)
& \small (bigon-free) \\
\hline

\end{tabular}
\renewcommand{\arraystretch}{1}
\hspace*{\fill}
\caption{Bounds on the $d$-independence number of optimal $1$-planar graphs.} 
\label{ta:optimal}
\end{table}


Our paper is organized as follows.
After giving preliminaries, we first present 
in \Cref{upper_bounds} upper bounds for the $d$-independence number of $1$-planar graphs,
strongly inspired by techniques from \cite{BiedlW21} to bound matchings in $1$-planar graphs.
In \Cref{Examples}, we then construct a number of infinite families of $1$-planar graphs with large $d$-independent sets 
that match these bounds, and then further constructions when we require simplicity and/or minimum degree $d$.
\Cref{optimal} focuses on the independence number of optimal $1$-planar graphs.
Finally, we conclude our paper with further thoughts and pose some open questions.

\section{Preliminaries}

Let $G = (V, E)$ be a graph on $n$ vertices. 
We assume familiarity with basic terms in graph theory, such as connectivity
, and refer the reader to Bondy and Murty~\cite{Bondy} for graph theoretic notations. 
Throughout the paper, our input is always a connected graph $G=(V,E)$ on $n$ vertices, and $n \geq 3$. 
Graph $G$ may have \emph{parallel edges} (multiple edges that connect the same pair of vertices),
but it never has \emph{loops} (an edge that connects a vertex to itself).
We use the letter $I$ to denote an \emph{independent set} in $G$, i.e., a set of vertices in $G$ without edges between them.
The notation $\overline{I}$ refers to the set of vertices $V\setminus I$.

A drawing $\Gamma$ of a graph $G$ assigns vertices to points in $\mathbb R^2$ and edges to curves in $\mathbb R^2$ in such a way that edge-curves join the corresponding endpoints. 
In this paper we only consider 
drawings where the following holds
(see \cite{MR4336223} for extensive discussions on possible restrictions on drawings):
\begin{enumerate*}
    \item No vertex-points coincide and no edge-curve intersects a vertex-point except at its two ends.
    \item  If two edge-curves intersect at a point $p$ that is not a common endpoint, then they properly cross at $p$; the point $p$ is called a \emph{crossing}.
    \item If three or more edge-curves intersect in a point $p$, then $p$ is a common endpoint of
the curves. 
    \item If the curves of two edges $e, e'$ intersect twice at two distinct points $p,p'$, then $e, e'$ are parallel edges and $p, p'$ are their endpoints.
    \item If the curve of an edge $e$ self-intersects
    at point $p$, then $e$ is a loop and $p$ is its endpoint.
\end{enumerate*}

A drawing is called \emph{$k$-planar} if each edge has at most $k$ crossings; a $0$-planar drawing is simply called \emph{planar}. 
In this paper, all drawings are $1$-planar. 
A graph is called \emph{planar/1-planar} if it has a planar/1-planar drawing. 
A \emph{plane}/\emph{1-plane} graph is a graph together with a fixed planar/1-planar drawing.
For a given drawing $\Gamma$ of a graph, the \emph{cells} are the connected regions of 
$\mathbb R^2\setminus \Gamma$; if $\Gamma$ is planar, then the cells are also called \emph{faces}.   The unbounded cell is called the \emph{outer-face} (even for drawings that are not planar).
A \emph{bigon} is a cell $F$ that is bounded by two distinct parallel edges that have no crossings. 
We call a planar/1-planar graph \emph{bigon-free} if it has a plane/1-planar drawing that has no bigons.
It is easily derived from Euler's formula that a planar bipartite graph with $n\geq 3$ vertices
has at most $2n-4$ edges; this formula holds as long as (in some planar drawing) every face has at least three incident
edges, i.e., as long as the graph is bigon-free. 

\section{Upper bounds on the \texorpdfstring{$d$}{d}-independence number}\label{upper_bounds}

We use three different approaches to prove the upper bounds for the $d$-independence number in 1-planar graphs.
The first approach is to take a bound from \cite{BiedlW21} that was intended for 1-planar graphs with minimum degree $d$, and hence it is tight only for small values of $d$, namely, $d=3,4,5$.
For $d=6$, we use a simple edge-counting argument to improve this bound slightly.
For $d\geq 7$, neither of the previous approaches yields a satisfactory upper bound, but inspired by the proof in \cite{BiedlW21}, we 
use a charging/discharging argument as a third approach towards an upper bound.

\subsection{Upper bounds on the \texorpdfstring{$d$}{d}-independence number for \texorpdfstring{$d=3, 4, 5$}{d=3, 4, 5}}

In \cite{BiedlW21}, the first author and Wittnebel studied the sizes of maximum matchings in 1-planar graphs of minimum degree $\delta$ (for various values of $\delta$).   To obtain bounds on these, they needed the following lemma on 
independent sets in $1$-planar graphs.

\begin{lemma} {\rm\cite{BiedlW21}}
\label{lem:bipartite}
Let $G$ be a simple $1$-planar graph. 
Let $I$ be a non-empty independent set in $G$
where $\deg(t)\geq 3$ for all $t\in I$.
Let $I_d$ be the vertices in $I$ that have degree $d$. 
Then 
\begin{equation}
\label{eq:bipartite}
2|I_3|+\sum_{d\geq 4} (3d-6)|I_d| \leq 12|\overline{I}|-24.
\end{equation}
\end{lemma}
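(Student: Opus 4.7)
My plan follows the edge-count strategy that underlies most bounds on bipartite 1-planar graphs. First, restrict attention to the bipartite subgraph $H$ of $G$ whose edges have exactly one end in $I$; since $I$ is independent, $H$ contains every edge incident to $I$, and $H$ inherits a 1-planar drawing from $G$. Hence bounding the weighted sum $2|I_3|+\sum_{d\ge 4}(3d-6)|I_d|$ reduces to analyzing $H$ together with its inherited drawing.

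The shape of the right-hand side is telling: $12|\overline{I}|-24 = 3\bigl(4|\overline{I}|-8\bigr)$, i.e.\ three times the maximum number of edges in a 1-planar graph on $\overline{I}$. This suggests constructing a 1-planar (multi)graph $G^\ast$ on vertex set $\overline{I}$ such that each $t\in I_d$ contributes a local gadget on $N(t)\subseteq\overline{I}$ whose edge count matches the coefficient in the sum, and then applying $|E(G^\ast)|\le 4|\overline{I}|-8$. Concretely, for $t\in I_d$ with $d\ge 4$, I would take the cyclic order of $N(t)$ around $t$ in the drawing of $H$ and insert a near-triangulation with roughly $3d-6$ edges drawn inside a small disk around $t$; for $t\in I_3$ only three neighbors are available and a smaller local replacement, using the special planar structure of a degree-$3$ vertex, yields the smaller coefficient~$2$ rather than~$3$. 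A clean variant is to produce three separate drawings, each carrying one ``layer'' of local gadgets, and sum $4|\overline{I}|-8$ over them; this naturally produces the factor $3$.

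The main technical obstacle is the interaction between the local gadget at $t$ and the crossings on $t$'s incident edges: in a 1-planar drawing each edge at $t$ can already carry a crossing, so a naively routed replacement arc could accumulate up to two crossings and destroy 1-planarity. Confining the gadget to a small neighborhood of $t$, where the crossings on $t$'s edges do not yet appear, and then arguing that the combined drawing on $\overline{I}$ is still 1-planar, is the delicate step. A secondary subtlety is the case $d=3$, where one must explain why the coefficient drops from the ``generic'' value $3d-6=3$ down to $2$; I would expect this to come from a local discharging argument that redistributes one unit of charge from each degree-$3$ vertex of $I$ to its neighbors, exploiting that three incident edges cannot all be crossed simultaneously without wasting drawing area.
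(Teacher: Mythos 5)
First, note that the paper does not reprove \Cref{lem:bipartite}: it is imported verbatim from \cite{BiedlW21}, and the only in-paper argument in this spirit is the proof of \Cref{lem:IS7}, which charges $2$ to each uncrossed and $1$ to each crossed edge of a maximal bipartite $1$-plane drawing, bounds the total charge by $4n-8$ (delete one edge from each crossing pair to obtain a bipartite bigon-free plane graph), and lets each $t\in I$ collect at least $\deg(t)+\lceil\deg(t)/3\rceil$ charges using the fact that no three consecutive edges around $t$ are crossed. Your plan is a different route---replacing each $t$ by an edge gadget on $N(t)\subseteq\overline{I}$ and bounding the edges of the resulting graph on $\overline{I}$---and in principle such a planarization argument can be made to work, but as written it has several genuine gaps.

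Concretely: (i) a gadget with $3d-6$ edges on the $d$ neighbours of $t$, drawn inside a small disk around $t$ with all $d$ attachment points on the disk boundary, would be outerplanar if crossing-free and hence has at most $2d-3$ edges; even allowing one crossing per edge inside the disk caps the count at roughly $\tfrac{5}{2}d-4$, which falls short of $3d-6$ for all $d\geq 5$. Your ``three layers'' remedy is therefore essential, but you never specify which gadget edges go into which layer nor why each layer is $1$-planar (or bigon-free) globally. (ii) The crossing-accumulation problem you flag is not solved by confining the gadget to a small disk: a gadget edge $s_is_j$ must still run alongside the full curves of $(t,s_i)$ and $(t,s_j)$ to reach its endpoints, and if both of those are crossed it inherits two crossings. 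The tool that controls this in \cite{BiedlW21} and in the proof of \Cref{lem:IS7} is the maximality argument showing that no three consecutive edges around $t$ are crossed; your proposal never sets this up. (iii) The bound $|E|\leq 4|\overline{I}|-8$ is valid only for simple or bigon-free $1$-planar graphs, yet your $G^\ast$ is explicitly a multigraph: two vertices of $I$ with overlapping neighbourhoods contribute parallel copies of the same pair, so you must argue that each layer can be kept free of bigons. (iv) The coefficient $2$ for $|I_3|$ is precisely where the simplicity hypothesis enters (as the paper remarks before \Cref{lem:IS7}), and you defer it entirely to an unspecified discharging step. Until (i)--(iv) are filled in, this is an outline of a plausible strategy rather than a proof.
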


We use this lemma now for easy upper bounds on the size of a $d$-independent set $I$ in a 1-planar graph.
Write again $I_d$ for all vertices of $I$ that have a degree exactly $d$.

\begin{corollary}
In a simple $1$-planar graph, any  $3$-independent set $I$ satisfies   $|I|\leq \tfrac{6}{7}(n-2)$.
\end{corollary}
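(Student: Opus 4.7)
The plan is to extract this corollary as a direct consequence of \Cref{lem:bipartite} via a weak bounding of the coefficients on the left-hand side. If $I$ is empty the inequality $|I|\leq \tfrac{6}{7}(n-2)$ holds trivially (assuming $n\geq 2$), so assume $I\neq\emptyset$. Because $I$ is a $3$-independent set, every vertex of $I$ has degree at least $3$, so $I$ decomposes as $I = I_3\cup I_4\cup I_5\cup\cdots$, and thus \Cref{lem:bipartite} applies and gives
\begin{equation*}
2|I_3| + \sum_{d\geq 4}(3d-6)|I_d| \;\leq\; 12|\overline{I}| - 24.
\end{equation*}

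The key observation is that every coefficient on the left-hand side is at least $2$: the coefficient of $|I_3|$ is exactly $2$, and for $d\geq 4$ we have $3d-6\geq 6 \geq 2$. Therefore
\begin{equation*}
2|I| \;=\; 2|I_3| + \sum_{d\geq 4}2|I_d| \;\leq\; 2|I_3| + \sum_{d\geq 4}(3d-6)|I_d| \;\leq\; 12|\overline{I}| - 24.
\end{equation*}

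Now I would substitute $|\overline{I}| = n - |I|$ and solve: dividing by $2$ gives $|I|\leq 6(n-|I|) - 12$, i.e.\ $7|I|\leq 6n-12$, and hence $|I|\leq \tfrac{6}{7}(n-2)$, as claimed.

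There is no real obstacle here: the corollary is essentially an algebraic consequence of \Cref{lem:bipartite}, obtained by throwing away the strength gained from high-degree vertices of $I$ (whose coefficient $3d-6$ is replaced by $2$). The only thing to double-check is that the coefficient $2$ is used for \emph{every} degree class present in $I$, which follows because $3\cdot 4 - 6 = 6\geq 2$ and the $d=3$ case already has coefficient exactly $2$.
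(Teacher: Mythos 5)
Your proof is correct and follows essentially the same route as the paper's: apply \Cref{lem:bipartite}, bound every left-hand coefficient below by $2$, and solve the resulting linear inequality for $|I|$. Your extra remark about the empty-$I$ case is a harmless (and slightly more careful) addition, since the lemma as stated requires $I$ to be non-empty.
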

\begin{proof}
We have $2|I|=2\sum_{d\geq 3}|I_d| \leq 2|I_3|+\sum_{d\geq 4}(3d-6)|I_d|\leq 12(n-|I|)-24$, by \Cref{lem:bipartite}.
Therefore $14|I|\leq 12n-24$ which implies that $|I|\leq \tfrac{6}{7}(n-2)$.
\end{proof}
\begin{corollary}\label{cor:d=4}
In a simple $1$-planar graph, any  $4$-independent set $I$ satisfies   $|I|\leq \tfrac{2}{3}(n-2)$.
\end{corollary}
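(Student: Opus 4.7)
The plan is to invoke \Cref{lem:bipartite} in exactly the same way as in the preceding corollary, but exploiting the stronger hypothesis that every vertex of $I$ has degree at least $4$. Since $I$ is a $4$-independent set, no vertex of $I$ has degree $3$, so $I_3 = \emptyset$ and the $2|I_3|$ term in \eqref{eq:bipartite} drops out. What remains is
\[
\sum_{d\geq 4}(3d-6)|I_d| \;\leq\; 12|\overline{I}|-24.
\]

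Next I would use the elementary observation that the coefficient $3d-6$ is minimized at $d=4$, where it equals $6$. Therefore
\[
6|I| \;=\; 6\sum_{d\geq 4}|I_d| \;\leq\; \sum_{d\geq 4}(3d-6)|I_d| \;\leq\; 12(n-|I|)-24,
\]
and rearranging yields $18|I|\leq 12n-24$, i.e.\ $|I|\leq \tfrac{2}{3}(n-2)$, as claimed.

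There is essentially no obstacle here: the entire content lies in \Cref{lem:bipartite}, and the corollary is an immediate two-line calculation once one notices that the assumption $\deg(t)\geq 4$ simultaneously kills the only small-coefficient term $2|I_3|$ and makes the remaining coefficient $3d-6$ at least $6$. The only thing worth double-checking is that \Cref{lem:bipartite} truly applies, i.e.\ that $I$ is non-empty (otherwise the bound $|I|\leq \tfrac{2}{3}(n-2)$ holds trivially for $n\geq 3$) and that every $t\in I$ has $\deg(t)\geq 3$, which follows from the $4$-independence hypothesis.
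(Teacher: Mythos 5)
Your proposal is correct and follows essentially the same route as the paper: both drop the $2|I_3|$ term (since $I_3=\emptyset$ for a $4$-independent set), lower-bound the remaining coefficients $3d-6$ by $6$, and combine with $|\overline{I}|=n-|I|$ to get $18|I|\leq 12n-24$. Your extra remark about checking non-emptiness of $I$ before invoking \Cref{lem:bipartite} is a harmless bit of added care that the paper leaves implicit.
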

\begin{proof}
Since $I_3$ is empty, 
we have $6|I|=\sum_{d\geq 4}6|I_d| \leq 2|I_3|+\sum_{d\geq 4}(3d-6)|I_d|\leq 12(n-|I|)-24$ by \Cref{lem:bipartite}.
Therefore $18|I|\leq 12n-24$ which implies that $|I|\leq \tfrac{2}{3}(n-2)$.
\end{proof}
\begin{corollary}\label{cor:d=5}
In a simple $1$-planar graph, any  $5$-independent set $I$ satisfies   $|I|\leq \tfrac{4}{7}(n-2)$.
\end{corollary}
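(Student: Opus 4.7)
The plan is to imitate exactly the argument used for \Cref{cor:d=4}, adjusted for the fact that in a $5$-independent set every vertex has degree at least $5$.

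Concretely, since every $t\in I$ satisfies $\deg(t)\geq 5$, the sets $I_3$ and $I_4$ are empty, so $|I|=\sum_{d\geq 5}|I_d|$. For each $d\geq 5$ the coefficient $3d-6$ in inequality~\eqref{eq:bipartite} is at least $9$, minimized at $d=5$. Therefore I would write
\[
9|I| \;=\; \sum_{d\geq 5} 9\,|I_d| \;\leq\; \sum_{d\geq 5}(3d-6)|I_d| \;\leq\; 2|I_3|+\sum_{d\geq 4}(3d-6)|I_d|,
\]
and then apply \Cref{lem:bipartite} to the right-hand side to obtain $9|I|\leq 12|\overline{I}|-24 = 12(n-|I|)-24$. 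Rearranging gives $21|I|\leq 12n-24$, hence $|I|\leq \tfrac{12}{21}(n-2) = \tfrac{4}{7}(n-2)$, as claimed.

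There is essentially no obstacle here: the only thing to check is the trivial observation that $3d-6\geq 9$ whenever $d\geq 5$, which is what makes the lower-degree terms of the sum in \Cref{lem:bipartite} irrelevant. The proof is then a one-line computation, parallel in structure to the two preceding corollaries.
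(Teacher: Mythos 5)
Your proposal is correct and follows the paper's proof essentially verbatim: both note that $I_3$ and $I_4$ are empty, bound $9|I|$ by the left-hand side of inequality~\eqref{eq:bipartite} using $3d-6\geq 9$ for $d\geq 5$, and apply \Cref{lem:bipartite} to obtain $21|I|\leq 12n-24$. No differences worth noting.
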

\begin{proof}
Since $I_3$ and $I_4$ are empty, 
we have $9|I|=\sum_{d\geq 5}9|I_d| \leq 2|I_3|+ \sum_{d\geq 4}(3d-6)|I_d|\leq 12(n-|I|)-24$
 by \Cref{lem:bipartite}.
 Therefore $21|I|\leq 12n-24$ which implies that $|I|\leq \tfrac{4}{7}(n-2)$.
\end{proof}

\subsection{Upper bounds for on the 6-independence number}

Note that if we apply \Cref{lem:bipartite} to a 6-independent set $I$, we can get a bound of $12|I|=\sum_{d\geq 6}12|I_d| \leq 2|I_3|+ \sum_{d\geq 4}(3d-6)|I_d|\leq 12(n-|I|)-24$
and therefore $24|I|\leq 12n-24$ which means $|I|\leq \tfrac{1}{2}(n-2)$. 
However, we are able to get a slightly better bound by using an alternative argument. 

\begin{lemma}
\label{lem:IS6}
Let $G$ be a simple $1$-planar graph. 
Then for any $d$-independent set $I$ 
we have $|I|\leq \tfrac{3n-8-\chi}{d}$
where $\chi=1$ if $n$ is odd and $\chi=0$ otherwise.
\end{lemma}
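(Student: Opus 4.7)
The plan is a one-shot edge-counting argument on the bipartite subgraph separating $I$ from $\overline{I}$. Let $H$ denote the subgraph of $G$ with vertex set $V$ and with edge set consisting of all edges of $G$ that have one endpoint in $I$ and one in $\overline{I}$. Inheriting the given $1$-planar drawing of $G$, the graph $H$ is a simple, bipartite, $1$-planar graph on $n$ vertices with bipartition $(I,\overline{I})$ (some vertices of $\overline{I}$ may be isolated in $H$, but that only helps).

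Next I would lower-bound $|E(H)|$. Since $I$ is independent, every edge of $G$ incident to a vertex $v\in I$ has its other endpoint in $\overline{I}$ and therefore lies in $H$. Summing the $G$-degrees of the vertices in $I$ yields $|E(H)| = \sum_{v\in I} \deg_G(v) \geq d\,|I|$.

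The third ingredient is the known upper bound on the edge count of simple bipartite $1$-planar graphs due to Karpov: on $n$ vertices such a graph has at most $3n-8$ edges if $n$ is even and at most $3n-9$ edges if $n$ is odd, which is precisely $3n-8-\chi$ in the notation of the lemma. Combining this with the previous inequality gives $d|I|\leq 3n-8-\chi$, which rearranges to the claimed $|I|\leq (3n-8-\chi)/d$.

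The only real ``obstacle'' is having access to the parity-sharpened bipartite $1$-planar edge bound; without the parity refinement one only gets $|I|\leq (3n-8)/d$, losing the $\chi$-term. If a self-contained derivation were required, the natural route is to planarize the $1$-planar drawing by replacing each crossing with a degree-$4$ vertex, then to apply Euler's formula, using that bipartiteness forces every face of the planarization to have length at least $4$; a parity argument at the crossing-vertices then extracts the extra edge when $n$ is odd. But for the purposes of the lemma the argument really is just ``edges between $I$ and $\overline{I}$ form a bipartite $1$-planar graph'' plus one known inequality.
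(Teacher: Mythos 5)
Your proposal is correct and is essentially identical to the paper's proof: both pass to the bipartite subgraph of edges between $I$ and $\overline{I}$, invoke Karpov's parity-refined bound of $3n-8-\chi$ edges for simple bipartite $1$-planar graphs, and conclude via $d|I|\leq |E|$. No differences worth noting.
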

\begin{proof}
Consider the $1$-planar bipartite subgraph $G^-$
of $G$ that consists of only the edges between $I$ and $\overline{I}$.
This graph has $n$ vertices and has (by a result by Karpov \cite{Karpov14})
at most $3n-8-\chi$ edges.   
Since $I$ is a $d$-independent set, $d|I|\leq |E(G^-)|\leq 3n-8-\chi$
which implies the result.
\end{proof}

\begin{corollary}
In a simple $1$-planar graph, any  $6$-independent set $I$ satisfies   $|I|\leq \lceil \tfrac{1}{2}(n-4) \rceil$.
\end{corollary}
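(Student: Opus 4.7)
The plan is to derive this corollary as an immediate arithmetic consequence of \Cref{lem:IS6} applied with $d=6$, using a parity split to handle the ceiling on the right-hand side.

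First I would invoke \Cref{lem:IS6} with $d=6$, obtaining $|I|\leq \tfrac{3n-8-\chi}{6}$ where $\chi\in\{0,1\}$ depends on the parity of $n$. I would then split into the two cases on the parity of $n$ and show that in both cases the resulting bound, together with the integrality of $|I|$, implies $|I|\leq \lceil \tfrac{1}{2}(n-4)\rceil$.

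In the odd case, \Cref{lem:IS6} gives $|I|\leq \tfrac{3n-9}{6}=\tfrac{n-3}{2}$, which is already an integer and coincides with $\lceil \tfrac{1}{2}(n-4)\rceil$ when $n$ is odd (since then $\tfrac{n-4}{2}=\tfrac{n-3}{2}-\tfrac{1}{2}$ rounds up to $\tfrac{n-3}{2}$), so nothing further is needed. In the even case, \Cref{lem:IS6} gives $|I|\leq \tfrac{3n-8}{6}=\tfrac{n-4}{2}+\tfrac{2}{3}$. Since $|I|$ is a non-negative integer and $\tfrac{n-4}{2}$ is also an integer (as $n$ is even), this forces $|I|\leq \tfrac{n-4}{2}=\lceil \tfrac{1}{2}(n-4)\rceil$.

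There is essentially no main obstacle here: the content of the corollary lies in \Cref{lem:IS6}, which in turn rests on Karpov's bound on bipartite 1-planar graphs; the corollary itself is a one-line calculation in each parity case. The only care needed is to recognize that for even $n$ the raw bound $\tfrac{3n-8}{6}$ is fractional and must be tightened via integrality of $|I|$, which is exactly why the ceiling on the right-hand side of the stated inequality suffices for both parities.
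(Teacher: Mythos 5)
Your proposal is correct and follows essentially the same route as the paper: apply \Cref{lem:IS6} with $d=6$, split on the parity of $n$, and in the even case use the integrality of $|I|$ to round the fractional bound $\tfrac{3n-8}{6}$ down to $\tfrac{1}{2}(n-4)$. The arithmetic is organized slightly differently (the paper writes the even-case bound as $\tfrac{n}{2}-\tfrac{4}{3}$ rather than $\tfrac{n-4}{2}+\tfrac{2}{3}$), but the argument is identical.
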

\begin{proof}
If $n$ is odd then $|I|\leq \tfrac{1}{6}(3n-9)=\tfrac{1}{2}(n-3)=\lceil \tfrac{1}{2}(n-4) \rceil$.
If $n$ is even  then 
$|I|\leq \tfrac{1}{6}(3n-8)= \tfrac{n}{2} - \tfrac{4}{3}$,
and by integrality of $|I|$ hence
$|I|\leq \tfrac{n}{2}-2 = \tfrac{1}{2}(n-4)$.
\end{proof}

Generally, the bound of \Cref{lem:IS6} will be the best upper bound that we can have found for simple graphs if $d\geq 6$ is divisible by 3.

\subsection{Upper bounds for \texorpdfstring{$d \geq 7$}{d ≥ 7}}

One can argue  that
applying \Cref{lem:bipartite,lem:IS6},
yields
 upper bounds for the $7$-independence number of $\approx \tfrac{3}{7}n$ and $\approx \tfrac{4}{9}n$, respectively.
We obtain an even better upper bound by using a charging/discharging argument.
This argument is based on the proof of \Cref{lem:bipartite} from \cite{BiedlW21}, but we modify it in two ways.   First,  as already hinted at in \cite{BiedlW21}, the bound can be improved when many vertices in $I$ have large degrees.   Second, the proof in \cite{BiedlW21} assumed simplicity, but inspection of the proof showed that it was used only for vertices of $I$ that have degree 3.
For vertices of higher degrees, we only need that  the graph is bigon-free (recall that this means that it has a 1-planar drawing without bigons) and can therefore write a more general statement.   Note that for $d=4,5$, the following lemma strengthens \Cref{cor:d=4,cor:d=5} in that it gives the same bound but does not require the graph to be simple.

\begin{lemma}
\label{lem:IS7}
Let $G$ be a bigon-free $1$-plane graph with a $d$-independent set $I$ for some integer $d\geq 3$.
Then $$|I|\leq \frac{4}{d+\lceil \tfrac{d}{3}\rceil}(n-2).$$
\end{lemma}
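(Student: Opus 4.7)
My plan is to adapt and refine the discharging proof of \Cref{lem:bipartite} from \cite{BiedlW21}, strengthening the accounting at high-degree $I$-vertices and replacing the simplicity hypothesis by bigon-freeness.

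First I reduce to the bipartite subgraph. Let $G^-$ denote the spanning subgraph of $G$ consisting exactly of the edges with one endpoint in $I$ and one in $\overline I$; each $v\in I$ retains $\deg_{G^-}(v)=\deg_G(v)\ge d$, and the inherited 1-plane drawing of $G^-$ is bipartite and bigon-free. I planarize $G^-$ by inserting a degree-$4$ dummy vertex at each of its $c$ crossings, producing a plane multigraph $\tilde G$ with $n+c$ vertices, $|E(G^-)|+2c$ edges, and $|E(G^-)|+c-n+2$ faces by Euler. Two structural observations then drive the argument: every face of $\tilde G$ has size at least $3$ (from bigon-freeness), and no two vertices consecutive along any face boundary both lie in $I$ (from bipartiteness), so each face $F$ has at most $\lfloor |F|/2\rfloor$ corners at $I$-vertices and every triangular face of $\tilde G$ contains at least one dummy vertex.

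Next I set up a discharging argument with initial charge $\deg_{\tilde G}(v)-4$ at each vertex $v$ and $|F|-4$ at each face $F$, totalling $-8$ by Euler. The rules move charge from faces of size $\ge 4$ and from $\overline I$-vertices (down to a $-4$ floor) toward $I$-vertices and triangular faces, calibrated so that after redistribution each $v\in I$ carries charge at least $d_v+\lceil d_v/3\rceil-4$, each $\overline I$-vertex carries charge at least $-4$, and each crossing and each face carries charge at least $0$. Summing these lower bounds against the invariant total $-8$ gives $\sum_{v\in I}(d_v+\lceil d_v/3\rceil) \le 4|I| + 4|\overline I| - 8 = 4(n-2)$, and since $d_v \ge d$ for every $v\in I$, this yields the target $(d+\lceil d/3\rceil)|I| \le 4(n-2)$.

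The main obstacle is engineering the $\lceil d_v/3\rceil$ bonus at each $v\in I$, rather than the constant surplus of $2$ that \Cref{lem:bipartite} produces. Intuitively, out of every three consecutive angles at $v$ there should be a face of size $\ge 4$ whose surplus $|F|-4$ can be routed to $v$, but the ceiling forces casework on $d_v \bmod 3$: when $d_v \not\equiv 0 \pmod 3$ one must locate a concrete extra face or $\overline I$-neighbor supplying the fractional unit, using the bigon-free structure of $\tilde G$ around $v$ to control how many triangular faces can appear in succession. Finally, the simplicity hypothesis of \cite{BiedlW21} was used only to preclude multi-edges at degree-$3$ vertices of $I$: for $d\ge 4$ such vertices are absent from $I$, and for $d=3$ the lemma's conclusion reduces to $|I|\le n-2$, which already follows from bigon-freeness because $|\overline I|\le 1$ together with $\deg_{G^-}(v)\ge 3$ would force $\ge 3$ parallel edges and hence a bigon.
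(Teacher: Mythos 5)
Your high-level accounting is the right one---you aim to give each $t\in I$ a total of $\deg(t)+\lceil \deg(t)/3\rceil$ against a global budget of $4(n-2)$, which is exactly what the paper does---but the step that actually produces the $\lceil \deg(t)/3\rceil$ bonus is missing, and the mechanism you sketch for it does not work on the graph as given. In the drawing of $G^-$ a vertex $t\in I$ may have arbitrarily many (even all) of its incident edges crossed. In your planarized graph $\tilde G$ all neighbours of such a $t$ are dummy crossing vertices, which start with charge $\deg-4=0$ and which you require to end with charge at least $0$, so they can send nothing to $t$; and every face at $t$ can have size exactly $4$ (between two consecutive crossed edges a triangle $t,x_i,x_{i+1}$ is impossible since an edge segment joining two crossing points would mean a twice-crossed edge, but a quadrilateral $t,x_i,w,x_{i+1}$ with $w$ a common endpoint of the two crossing edges is perfectly realizable), so the surplus $|F|-4$ you propose to route to $t$ is $0$. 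Such a $t$ then ends with charge $\deg(t)-4$, short of your target $\deg(t)+\lceil\deg(t)/3\rceil-4$ by the entire bonus, and nothing in your scheme recovers the deficit globally. The intuition that ``out of every three consecutive angles at $v$ there is a face of size $\geq 4$ with positive surplus'' is simply false for the drawing you start from.

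The paper supplies precisely the missing ingredient: after deleting $G[\overline{I}]$ it \emph{augments} the drawing with every $I$--$\overline{I}$ edge that can be added without creating a loop, a bigon, or a second crossing on an edge. Maximality then forbids three consecutive crossed edges around any $t\in I$ (otherwise an uncrossed edge from $t$ to an endpoint of the middle crossing edge could have been added), which guarantees at least $\lceil\deg(t)/3\rceil$ uncrossed edges at $t$; since augmentation only increases degrees in $I$, proving the bound for the augmented graph suffices. Once this is in place, no Euler/face discharging is needed: give each uncrossed edge $2$ charges and each crossed edge $1$; keeping one edge from each crossing pair leaves a planar bipartite bigon-free graph with at most $2n-4$ edges, so the total charge is at most $4n-8$, while each $t\in I$ collects at least $\deg(t)+\lceil\deg(t)/3\rceil\geq d+\lceil d/3\rceil$. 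To repair your write-up, insert the augmentation-and-maximality step and replace the discharging machinery by this direct edge count; as written, the proposal asserts the crucial local lower bound without proving it. (Your separate treatment of $d=3$ is then unnecessary, and in any case the claim that three parallel edges in a $1$-plane drawing force a bigon is not immediate.)
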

\begin{proof} 
We use a charging scheme, where we assign some {\em charges}
(units of weight) to edges in $G$ (as well as to some edges that we add 
to $G$), redistribute these charges to the vertices in $I$,
and then count the number of charges in two ways to obtain the 
bound.

As a first step, delete all edges of $G[\overline{I}]$ so that $G$ becomes bipartite.
Next, add edges to $G$ to make it maximal while not violating other assumptions.
So we add any edge to the fixed 1-planar drawing $\Gamma$ of $G$ that connects $I$ to $\overline{I}$, does not create a loop or a bigon, and can be added with at most one crossing.
Both operations can only increase the degrees 
of vertices in $I$, so it suffices to prove the bound in the resulting drawing $\Gamma'$.

As shown in \cite{BiedlW21}, for any vertex $t\in I$ there
cannot be three consecutive crossed edges in the circular ordering
of edges around $t$. For if there were three such edges (say $(t,s_1),(t,s_2),(t,s_3)$)
then the edge that crosses $(t,s_2)$ has one endpoint in $\overline{I}$; call it $x$.   We could
have added an uncrossed edge $(t,x)$ by walking along the part-edges towards the crossing in $(t,s_2)$.
In the circular ordering around $t$ this new edge $(t,x)$ would be before or after $(t,s_2)$, hence between two crossed edges, and so 
would not have formed
a bigon.   This contradicts maximality.

We assign charges as follows:  Let $E_-$ be the uncrossed edges of $\Gamma'$;
each of those receives 2 charges.  Let $E_\times$ be the crossed edges
of $\Gamma'$; each of those receives 1 charge.  
If we remove one edge from each pair of crossing edges, 
we retain $\tfrac{1}{2}|E_\times| 
+ |E_-|$ edges and the resulting graph is planar, bipartite, bigon-free, and so has at most $2n-4$ edges.
Hence, we have
\begin{eqnarray}
\label{eq:1}
\#\mbox{charges} & = & 2|E_-|+1|E_\times| 
\leq 4n-8
\end{eqnarray}

For $t\in I$, let $c(t)$ be the total charges of incident edges of $t$
and write $\deg(t)$ for the degree of $t$.    
We know that there
are at least $\lceil \tfrac{\deg(t)}{3} \rceil $ uncrossed edges at $t$ since
there are no three consecutive crossed edges.   Thus $t$ obtains
$2 \lceil \tfrac{\deg(t)}{3} \rceil $ charges from three uncrossed edges, and
at least $\deg(t)-\lceil \tfrac{\deg(t)}{3} \rceil$ further charges from the remaining edges.
Hence $c(t)\geq \deg(t)+\lceil \tfrac{\deg(t)}{3} \rceil \geq d+\lceil \tfrac{d}{3}\rceil$ and
\begin{eqnarray}
\label{eq:2}
\#\mbox{charges} & = & \sum_{t\in I} c(t) \geq |I|(d+\lceil \tfrac{d}{3}\rceil) 
\end{eqnarray}
Combining this with (\ref{eq:1}) gives $|I|(d+\lceil \tfrac{d}{3}\rceil) \leq 4n-8$ as desired.
\end{proof}


With this we have proved the upper-bound entries in Table~\ref{ta:resultsNew}.

\section{1-planar graphs with large \texorpdfstring{$d$}{d}-independent sets}
\label{Examples}
\label{sec:constructions}

In this section, we construct several families of 1-planar graphs
that have large $d$-independent sets (for various values of $d$).
We begin with bigon-free graphs, where we can match
the bounds of \Cref{lem:IS7} \emph{exactly} using only a few constructions and
techniques.   However, most of the resulting graphs are not simple; we then give
further constructions for $d=3$ and $d=6,\dots,18$ that yield simple 1-planar
graphs with large $d$-independent sets.
Many of our constructions follow a common approach, using nested cycles 
with vertices inserted between them that belong to the independent set $I$. 

\subsection{Constructions that match \texorpdfstring{\Cref{lem:IS7}}{Lemma 4.2}}


\paragraph{The case $d=3$:}
For $d=3$, the bound of \Cref{lem:IS7} evaluates to $n-2$, which is essentially meaningless, but as we argue now, can be matched.   This uses a subgraph $H_3$ that will be needed in later constructions as well.


\begin{lemma}
For any integer $N$, there exists a bigon-free 1-planar graph $G_3$ with $n\geq N$ vertices and a $3$-independent set of size $n-2$.
\end{lemma}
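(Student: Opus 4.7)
The plan is to build $G_3$ as a ``bouquet'' of $n-2$ copies of a three-vertex gadget $H_3$, all sharing two common ``pole'' vertices $u$ and $v$. The gadget $H_3$ has vertex set $\{u, v, t\}$ together with two parallel edges $(u,t)$ and one edge $(t,v)$; in particular $t$ has degree $3$, and after assembly the non-pole vertices $t_1, \dots, t_{n-2}$ form a $3$-independent set $I$ of size $n-2$, matching the bound of \Cref{lem:IS7}.

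The key step is to give a single $H_3$ a bigon-free $1$-planar drawing confined to a narrow angular wedge around some ray, so that many copies can then be stacked without mutual interference. I would place $u$ at the origin, $v$ far out along a ray at some angle $\theta$, and $t$ on that ray at radius $1$. The first $(u,t)$-edge and the edge $(t,v)$ are drawn as the obvious radial segments. The second $(u,t)$-edge (the ``secondary'') leaves $u$ at angle $\theta+\varepsilon$, travels outward past $t$ to a radius $R>1$, arcs across angle $\theta$ at radius $R$, and returns inward to $t$ at angle $\theta-\varepsilon$. The only place this detour meets any other curve of the gadget is at one point on the arc, where it crosses $(t,v)$; hence each of the three edges of $H_3$ carries at most one crossing.

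To assemble $G_3$ I would fix $n-2$ pairwise-separated angles $\theta_1, \ldots, \theta_{n-2}$ with sufficiently narrow wedges, identify all copies of $u$ at the origin and all copies of $v$ at a single distant point, and embed one copy of $H_3$ in each wedge. Since distinct wedges are disjoint, no edge of one gadget can cross or border an edge of another, so the $1$-planar property of each individual $H_3$ drawing transfers to the global drawing. The number of gadgets is a free parameter, so for any prescribed $N$ we can use $n - 2 \geq N - 2$ gadgets to produce a valid $G_3$ with $n \geq N$.

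The main obstacle, and the one delicate point of the argument, is bigon-freeness. The only candidate bigon is the lens-shaped region inside gadget $i$ bounded by the two parallel $(u,t_i)$-edges; if empty, it would be a bigon. However, the segment of $(t_i,v)$ running from $t_i$ to its crossing with the secondary at radius $R$ lies strictly inside this lens and subdivides it into two sub-cells, each bounded by pieces of three distinct edges together with the crossing vertex. Hence no cell of the drawing is bounded by a pair of parallel uncrossed edges, the drawing is bigon-free, and the construction is complete.
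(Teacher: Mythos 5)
There is a genuine gap, and it sits exactly at the point you flag as delicate. Your secondary $(u,t)$-edge is made to cross the edge $(t,v)$, but these two edges share the endpoint $t$. Under the drawing conventions fixed in the preliminaries, two edge-curves that intersect at two distinct points must be parallel edges meeting only at their common endpoints; your secondary $(u,t)$-curve and the $(t,v)$-curve would meet both at $t$ and at the crossing on the arc of radius $R$, yet $\{u,t\}\neq\{t,v\}$, so they are not parallel. The drawing you describe is therefore not an admissible drawing at all, and the crossing on which your bigon-freeness argument rests is not available. This is not fixable by redrawing: in a three-vertex gadget every edge is incident to $t$, so every pair of edges is adjacent and no legal crossing can occur inside a gadget; since you also confine each gadget to its own wedge, each gadget must be drawn without crossings, and then the two parallel $(u,t)$-edges bound a lens that nothing can legally subdivide. (Placing $v$ and $(t,v)$ inside the lens would trap that copy of $v$ there, making it impossible to identify all copies of $v$ into a single distant vertex as your assembly requires.) So the construction as stated either contains bigons or is not a valid $1$-planar drawing.

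The paper avoids this trap by using a four-vertex gadget: a $4$-cycle on $a,b,c,d$ with two non-incident edges doubled. Its single crossing is between the two doubled copies, which are vertex-disjoint and hence may legally cross; that one crossing destroys both potential bigons simultaneously, and $\{a,c\}$ is a $3$-independent set of size two per gadget. The copies are glued by identifying all the $b$'s and all the $d$'s, giving $n=2s+2$ and $|I|=2s=n-2$. To salvage your bouquet idea you would need a gadget in which the forced crossing is between two non-adjacent edges, which requires at least four vertices --- at which point you have essentially rediscovered the paper's $H_3$.
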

\begin{proof}
Consider the bigon-free graph $H_3$ shown in \Cref{fig:H3}(a) which is a 4-cycle $\{a,b,c,d\}$ with two non-incident edges duplicated.
Note that $\{a,c\}$ is a 3-independent set (in all our figures, vertices in the independent set $I$ are white while vertices of $\overline{I}$ are black).  
For $s\geq 1$, let $G_3^{(s)}$ be the graph obtained 
by taking $s$ copies of $H_3$, contracting all copies of $b$ into one, and contracting all copies of $d$ into one.
This gives $n=2s+2$ vertices (so by choosing $s$ sufficiently big, we have $n\geq N$), and the $s$ copies of $\{a,c\}$ give a 3-independent set of size $2s=n-2$.
\end{proof}

\begin{figure}[ht]
\centering
\subfloat[\centering ]{ {\includegraphics[scale=0.7,page=2,trim=0 0 0 0,clip]{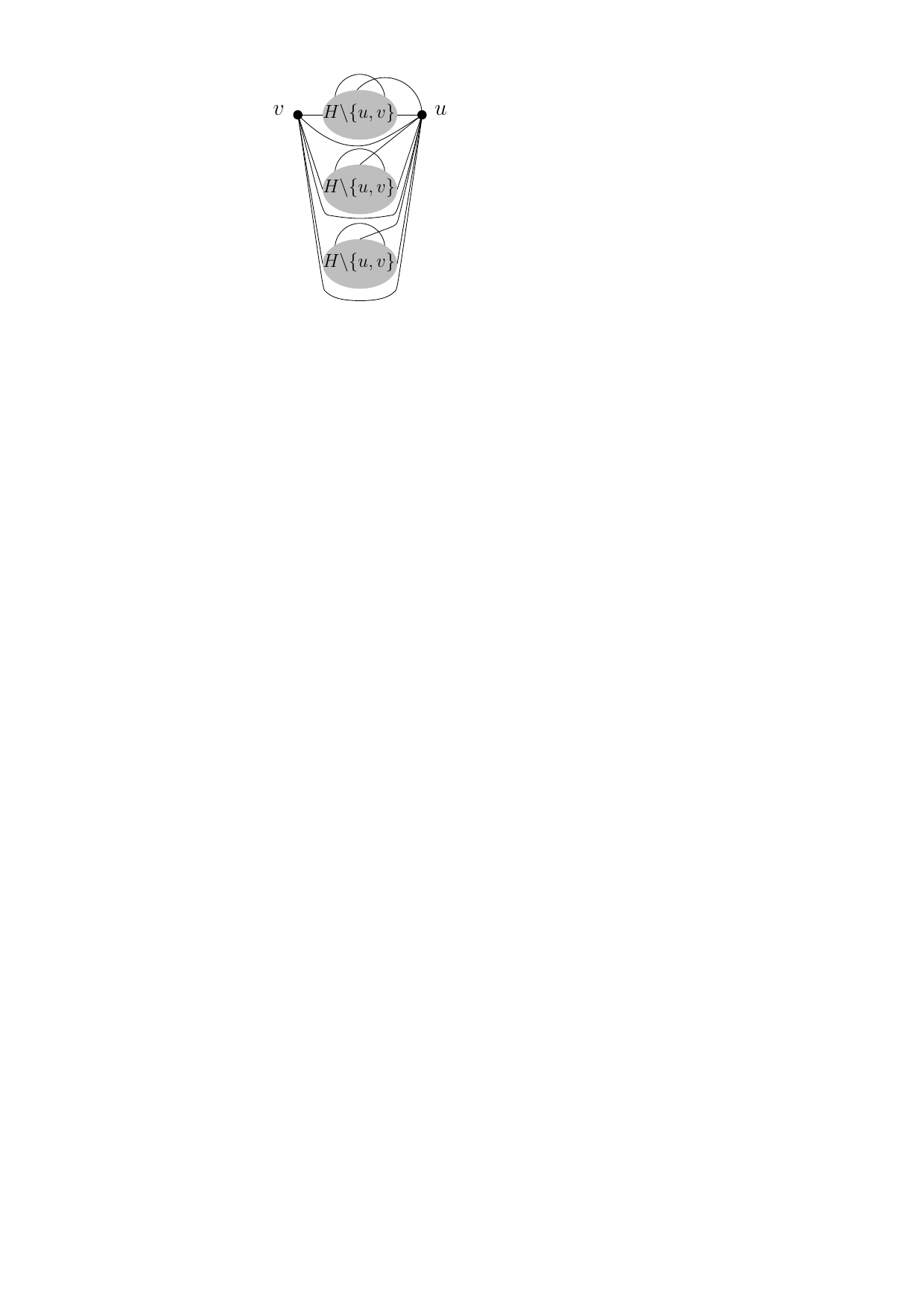}}
\quad
{\includegraphics[scale=0.7,page=3,trim=60 0 60 0,clip]{multiedge.pdf}}
}
\qquad
\qquad
\subfloat[\centering]{\includegraphics[scale=0.7,page=4]{multiedge.pdf}}
\caption{The graph $H_3$ (drawn in two different ways) and the graph $G_3$ with a 3-independent set of size $n-2$.   Dotted lines indicate the planar pairing.}
\label{fig:H3}
\label{fig:G3}
\end{figure}

\paragraph{The standard construction:}
We next construct graph families with large $d$-independent sets for $d\geq 4$.
Here we use a \emph{standard
construction} that we explain first in general terms  (see  Figure~\ref{fig:standard}).   
Fix three integer parameters $s,k,\tau$, where $s\geq 1$ is arbitrary (it serves to make the graph as big as we wish), while
$k\geq 3$ and $\tau$ will depend on parameter $d$.
We start with $s$ \emph{nested $k$-cycles}, i.e., 
cycles of length $k$ that are drawn (in the $1$-planar drawing that we construct)
such that each next cycle is inside the previous one.   We will show our
drawings on the \emph{standing flat cylinder}, i.e., a rectangle where the
left and right side have been identified; the nested cycles then become
horizontal lines.

The $s$ nested cycles define $s{+}1$ faces; of these, $s{-}1$ faces (the
\emph{middle faces}) are bounded by two disjoint nested cycles while two faces
(the \emph{end faces}) are bounded by one nested cycle.   Consider one middle
face, say it is bounded by nested cycles $P$ and $P'$.    We place $\tau$ vertices
$t_1,\dots,t_{\tau}$ inside this middle face;   these vertices (over all
middle faces together, plus possibly a few more at the end-faces) will be the independent set $I$. 
We make each $t_i$ adjacent to $\lceil d/2 \rceil$
vertices on one of $P,P'$ and $\lfloor d/2 \rfloor$ vertices on the other.
With this, the vertices in $I$ have degree $d$.    The main
bottleneck for $\tau$ and $k$ is that we must be able to place these vertices so that
the drawing is $1$-planar and bigon-free (and sometimes we also require the graph
to be simple or to have minimum degree $d$).
This will be mostly proved by pictures outlining the $1$-planar drawings.

\begin{figure}[ht]
\centering
 \subfloat[\centering ]{{\includegraphics[scale=0.2,height=5cm]{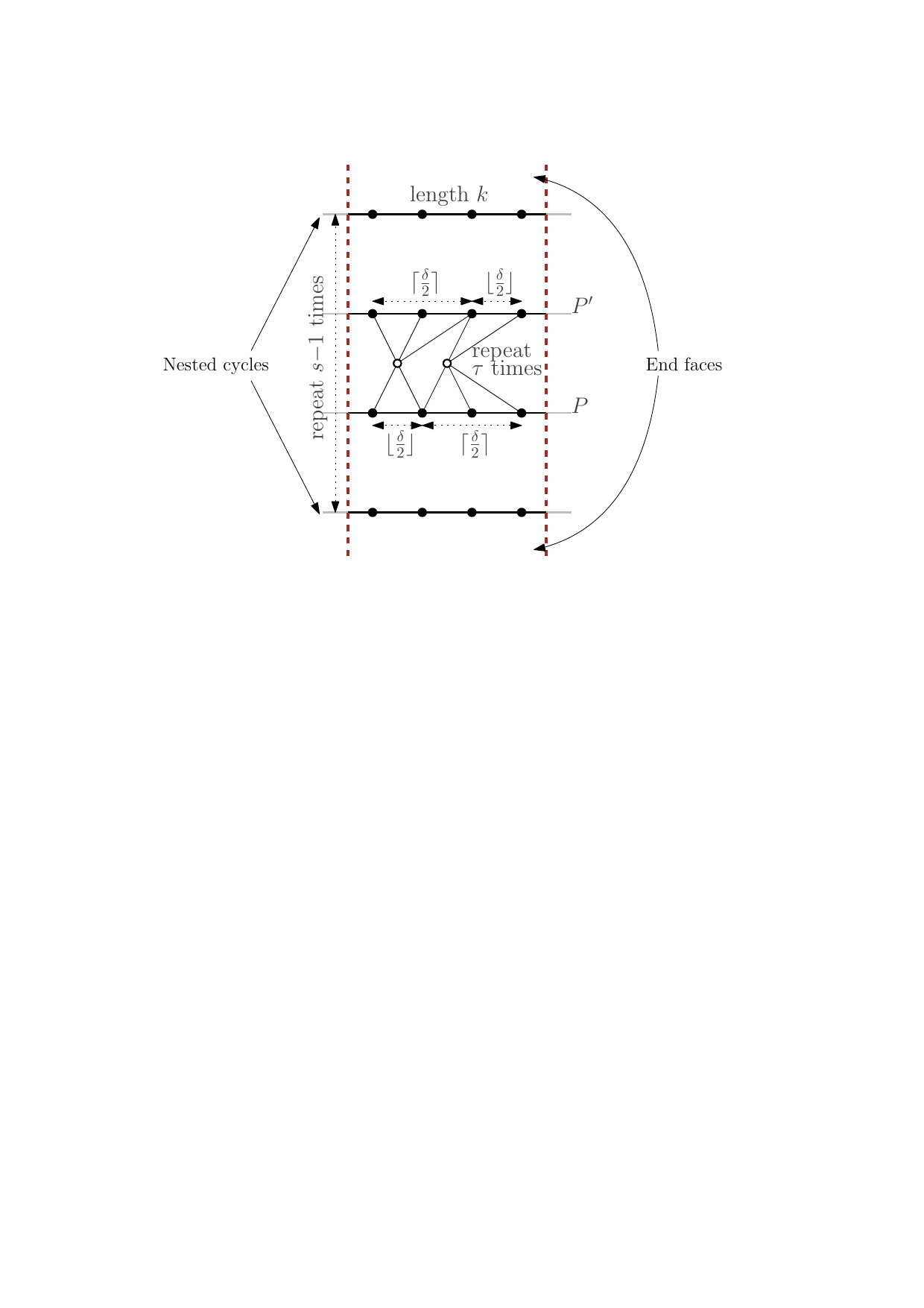} }}%
    \qquad
    \subfloat[\centering ]{{\includegraphics[scale=0.05,height=4cm]{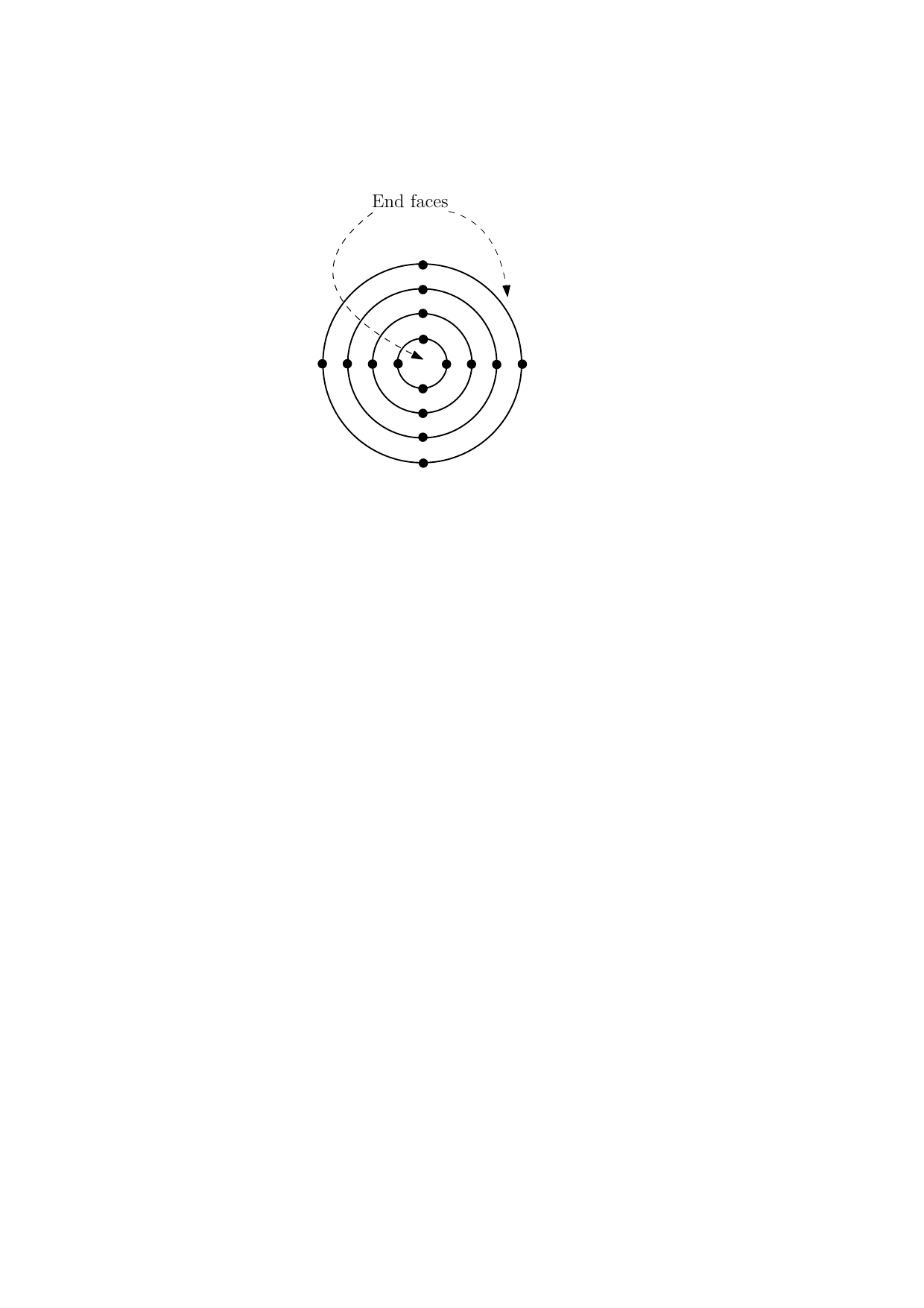} }}%
\caption{The standard construction 
on the standing cylinder and in the plane.}
\label{fig:standard}
\end{figure}

The construction inside the end-faces depends very much on $d$; sometimes
we add nothing at all, sometimes we add edges, sometimes we add more vertices
(in $I$ or in $\overline{I}$ or both). 
In total, the number of vertices is $n=s\cdot k + (s{-}1)\tau$ (plus whatever
we add in the end-faces). The size of the independent set is  $|I|=(s{-}1)\tau$
(plus whatever we add in the end-faces).

\paragraph{The case $d=4,5$:}
Using the standard construction, we now give graphs that have large $d$-independent
sets for $d=4,5$ and match \Cref{lem:IS7}.    As it turns out, we can do this and
even create simple graphs that have minimum degree $d$.

\begin{figure}[ht]
  \centering
    \subfloat[\centering ]{{\includegraphics[scale=0.7,page=3]{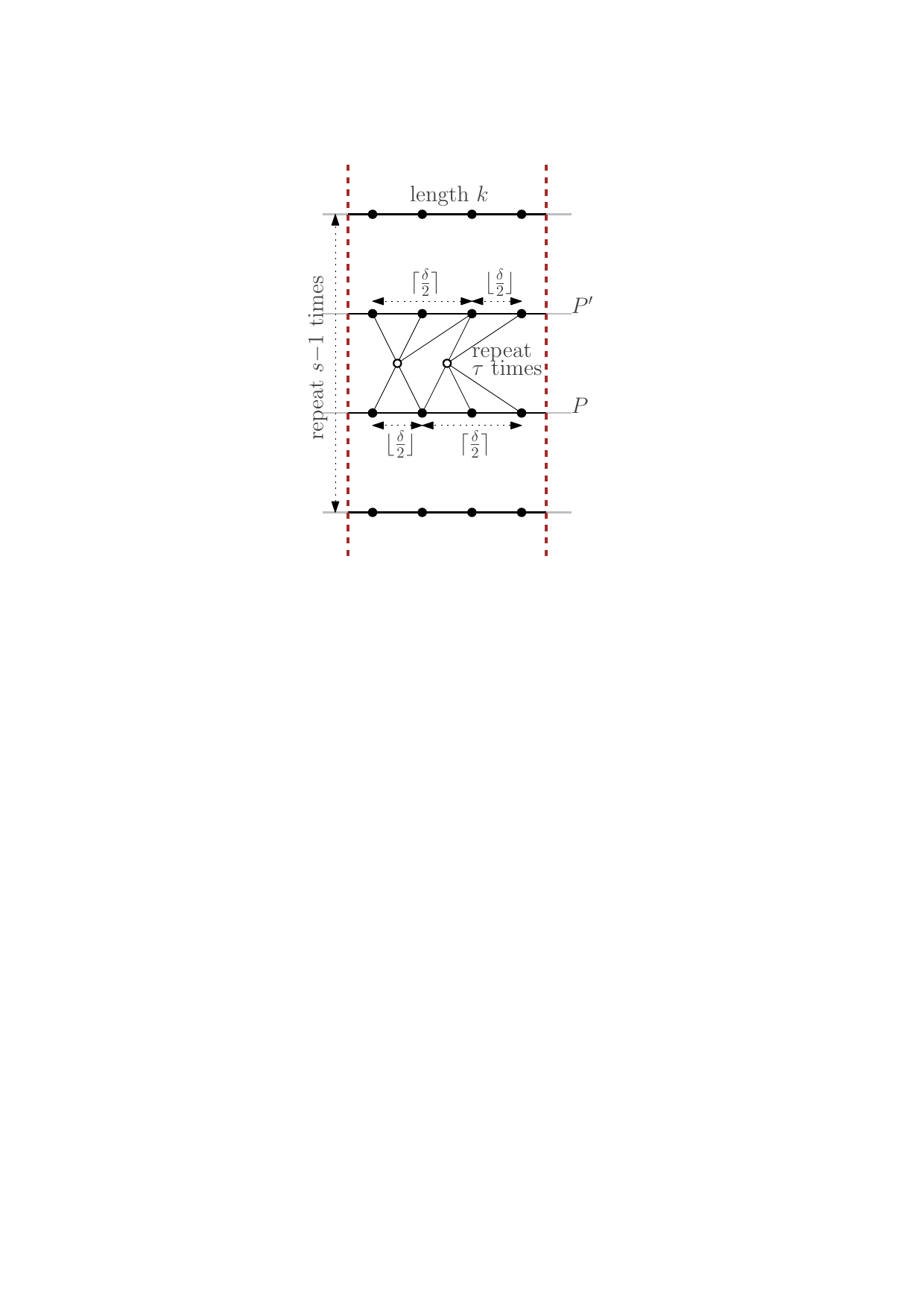} }}%
    \subfloat[\centering ]{{\includegraphics[scale=0.7,page=5]{standard_construction.pdf} }}%
    \caption{The graphs $G_d$ for $d=4$ and $d=5$ (construction shown for $s=3$).   
    }
    \label{fig:G4}
    \label{fig:G5}
\end{figure}

\begin{lemma}
For any integer $N$ and $d\in \{4,5\}$, 
there exists a simple $1$-planar graph $G_d$ with minimum degree $d$ and $n\geq N$ vertices that has an
independent set of size $\tfrac{4}{d+\lceil d/3 \rceil}(n-2)$.
\end{lemma}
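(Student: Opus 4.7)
The plan is to apply the standard construction with parameters matched to the target density. Since for $s$ nested $k$-cycles with $\tau$ independent-set vertices per middle face the ratio $|I|/n = (s-1)\tau/(sk+(s-1)\tau)$ tends to $\tau/(k+\tau)$, we set $\tau/(k+\tau) = 4/(d+\lceil d/3\rceil)$. For $d=4$ this gives $\tau=2k$, so we take $k=3$ and $\tau=6$; for $d=5$ it gives $3\tau=4k$, so we take $k=3$ and $\tau=4$. Each of the $\tau$ vertices placed inside a middle face is joined to $\lceil d/2\rceil$ vertices of one bounding triangle and $\lfloor d/2\rfloor$ of the other, giving it degree exactly $d$.

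The core technical step is to exhibit the explicit 1-planar, bigon-free, simple drawing inside each middle face, as depicted in Figure~\ref{fig:G4}. For $d=4$, the six $I$-vertices are assigned the three $2$-subsets of the top triangle (each used exactly twice), and similarly for the bottom triangle, while being arranged along the cylinder so that each of the four edges leaving an $I$-vertex has at most one crossing and no bigon arises. For $d=5$, each of the four $I$-vertices is fully adjacent to one triangle (three uncrossed edges) and joined to a $2$-subset of the other triangle, with the pattern of subsets chosen so that the edges can be routed $1$-planarly around the cylinder. In both cases the configuration tiles verbatim between every pair of consecutive nested triangles.

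For the minimum-degree condition, every $I$-vertex has degree exactly $d$ by construction; each interior cycle vertex collects contributions from two adjacent middle faces plus its two cycle edges, which more than suffices. The outermost and innermost triangles abut only one middle face, but a symmetric pairing distributes $\tau\lceil d/2\rceil = 12$ edges evenly among the $k=3$ extreme-triangle vertices (exactly $4$ per vertex), so together with the $2$ cycle edges each such vertex has degree at least $d$, provided we orient the corresponding end middle face so that the extreme triangle plays the $\lceil d/2\rceil$-side; no end-face gadget is needed. Counting, $n=sk+(s-1)\tau$ and $|I|=(s-1)\tau$, so choosing $s$ large enough makes $n\ge N$, and $|I|=\tfrac{4}{d+\lceil d/3\rceil}(n-2)$ holds up to an $O(1)$ additive discrepancy that can be absorbed by adding or removing one or two vertices inside one end face.

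The main obstacle is verifying that the dense middle-face configuration admits a simultaneously 1-planar, simple, and bigon-free drawing: the figure encodes both the combinatorial pairings on each bounding triangle and the routing of the edges, and one must check that no two $I$-vertices share the same pair of neighbors on either side (so that no parallel edges form a bigon) and that every edge is routed through at most one crossing.
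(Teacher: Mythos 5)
Your overall strategy (the standard construction with nested cycles and $\tau$ independent vertices per middle face) is the same as the paper's, but your parameter choices differ and the two places where you defer the work are exactly where the content of the lemma lies. First, the existence of a simple $1$-planar, bigon-free drawing of the middle-face configuration is asserted by pointing at ``the figure,'' but a blind proof must actually exhibit it, and for $d=5$ your described pattern cannot exist: with $k=3$ and $\tau=4$, at least two of the four $I$-vertices in an annulus bounded by two triangles are fully adjacent to the \emph{same} triangle, and two vertices cannot both send three \emph{uncrossed} edges to the three vertices of one bounding $3$-cycle of the annulus --- once $t_1$'s star to $\{a,b,c\}$ is drawn uncrossed, $t_2$ lies in one of the resulting regions and its edge to the third triangle vertex must cross an edge of $t_1$'s star (or a cycle edge, which then cannot reach its endpoint with only one crossing). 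This is presumably why the paper uses $k=12$, $\tau=16$ for $d=5$ (and $k=4$, $\tau=8$ for $d=4$) together with explicit end-face gadgets, rather than the minimal period $k=3$.

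Second, the size bound is claimed as an exact equality, and your counts do not achieve it: for $d=4$ with $k=3$, $\tau=6$ you get $n=9s-6$ and $|I|=6s-6$, whereas $\tfrac{2}{3}(n-2)=6s-\tfrac{16}{3}$ is not even an integer, so $|I|<\tfrac{2}{3}(n-2)$ for every $s$. The remark that the discrepancy ``can be absorbed by adding or removing one or two vertices inside one end face'' is not a proof: any vertex added to $I$ must have degree at least $d$ inside an end face bounded by only three vertices, without creating parallel edges or destroying $1$-planarity, and adjusting $n$ so that $d+\lceil d/3\rceil$ divides $4(n-2)$ while simultaneously raising $|I|$ to the exact target is precisely the role of the end-face gadgets (two $I$-vertices per end face for $d=4$; four path vertices plus twelve $I$-vertices per end face for $d=5$) that the paper designs and verifies. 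The minimum-degree bookkeeping in your third paragraph is fine, but without a concrete drawing for the middle faces and a concrete end-face construction making the count exact, the proof is incomplete, and for $d=5$ the proposed local pattern must be replaced.
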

\begin{proof}
We follow the standard construction, choosing $s$ big enough so that the resulting graph has at least
$N$ vertices.   We choose $k$ and $\tau$ as follows:
\begin{itemize}
\item For $d=4$, we use $k=4$ (so nested quadrangles) and $\tau=8$.   Into each
	end-face, we add two vertices of $I$ that we make adjacent to all four
	vertices of the nested quadrangle that bounds the face.    
    See Figure~\ref{fig:G4}(b), and 
   verify that
   this can be done such that the drawing is 1-planar and the minimum degree is 4.
	We note that $n=4s+8(s{-}1)+4 = 12s-4$ and $|I|=8(s-1)+4=8s-4=\tfrac{2}{3}(12s-6)=\tfrac{2}{3}(n-2)$.

\item For $\delta=5$, we use $k=12$ and $\tau=16$.   Into each
	end-face, we add four vertices of $\overline{I}$ connected in a path, and then 12 vertices of $I$ that we each 
	make adjacent to two vertices of the path and three 
	vertices of the 12-gon that bounds the face.    See Figure~\ref{fig:G5}, and 
verify that
 this can be done such that the drawing is 1-planar and the minimum degree is 5.
	With this we have $n=12s+16(s{-}1)+32 = 28s+16$ and $|I|=16(s-1)+24=16s+8=\tfrac{4}{7}(28s+14)=\tfrac{4}{7}(n-2)$. \qedhere
\end{itemize}
\end{proof}
 
\paragraph{Planar pairings:}   For the cases $d\geq 6$, we use a convenient trick that allows us to increase
degrees in existing constructions.  Let $G$ be a 1-plane graph with an independent set $I$ of even size $2\ell$.
A \emph{planar pairing} of $I$ is an enumeration of the vertices in $I$
as $t_1,\dots,t_{2\ell}$ such that we could add the edges $(t_{2i-1},t_{2i})$ for $i=1,\dots,\ell$
to the fixed drawing of $G$ without adding any crossing.     One easily verifies that for all graphs that
we have constructed thus far, the independent sets have a planar pairing (indicated by dotted lines in
the respective pictures). 

If we have such a planar pairing, then we can (for $i=1,\dots,\ell$) insert a 1-plane subgraph $H$ at the place
for $(t_{2i-1},t_{2i})$ and identify two of its outerface vertices with $t_{2i-1}$ and $t_{2i}$.       
This increases the degrees of the vertices in $I$ and keeps a 1-planar drawing and the planar pairing of $I$.
If we are permitted to have parallel edges, then the best graph to use as $H$ is the graph $H_3$ from  Figure~\ref{fig:H3}(a).

\begin{theorem}
\label{thm:lb:bigonfree}
For $d\geq 3$, the $d$-independence number of bigon-free 1-planar graphs is exactly $\tfrac{4}{d+\lceil d/3 \rceil}(n-2)$.
\end{theorem}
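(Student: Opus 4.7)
The upper bound is already established by \Cref{lem:IS7}. For the lower bound, the cases $d\in\{3,4,5\}$ are exactly the graphs $G_3, G_4, G_5$ constructed earlier in this section. For $d\geq 6$, my plan is to bootstrap these by iterated insertion of the subgraph $H_3$ along planar pairings, using the mechanism described just before the theorem statement.

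Write $d = d_0 + 3k$ with $d_0 \in \{3,4,5\}$ and $k\geq 0$. Pick $G_{d_0}$ large enough, and observe (from the dotted lines in \Cref{fig:H3} and \Cref{fig:G5}) that its independent set $I$ admits a planar pairing $\{t_{2i-1}, t_{2i}\}_{i=1}^{\ell}$. Now apply $k$ rounds of the following operation: for each pair $(t_{2i-1}, t_{2i})$, drop a fresh copy of $H_3$ (drawn as in \Cref{fig:H3}(a)) into a thin disk along the pairing curve and identify its independent-set vertices $a,c$ with $t_{2i-1}, t_{2i}$. The two new vertices (the copies of $b$ and $d$) join $\overline{I}$; the pair $\{t_{2i-1},t_{2i}\}$ remains independent, remains paired through the interior of the inserted $H_3$, and each of its members gains exactly 3 to its degree (two edges to the copy of $b$, one to the copy of $d$). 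Hence after $k$ rounds every vertex of $I$ has degree $d_0+3k=d$, the pairing is preserved, and we can iterate.

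For the exact arithmetic: if initially $|I| = \tfrac{4}{d_0+\lceil d_0/3\rceil}(n_0-2)$, then one round adds $2\ell = |I|$ new vertices in $\overline{I}$ and leaves $|I|$ unchanged. Writing $r = d_0+\lceil d_0/3\rceil$ and verifying that $(d_0{+}3) + \lceil (d_0{+}3)/3\rceil = r+4$ for each residue $d_0\equiv 0,1,2\pmod 3$, the new ratio is
\[
\frac{|I|}{(n_0+|I|)-2} \;=\; \frac{|I|}{(n_0-2) + |I|} \;=\; \frac{4/r}{1+4/r} \;=\; \frac{4}{r+4},
\]
which matches the bound for $d_0+3$ exactly. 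Induction over $k$ gives the lower bound for all $d \geq 3$.

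\textbf{Main obstacle.} The delicate point is verifying that inserting $H_3$ preserves bigon-freeness and 1-planarity. Since $H_3$ is drawn bigon-free (each pair of parallel edges is separated by the crossing of the two duplicated edges, as in \Cref{fig:H3}(a)) and it is placed in a thin disk along a planar pairing curve (which meets no other edges of $G_{d_0}$ by definition of planar pairing), no new cell bounded by parallel edges is introduced and no edge acquires more than one crossing. A secondary subtlety is that the identity $(d_0+3)+\lceil(d_0+3)/3\rceil = (d_0+\lceil d_0/3\rceil)+4$ must be checked for all three residues $d_0\bmod 3$, which is the reason we need three base constructions $G_3, G_4, G_5$ rather than just one.
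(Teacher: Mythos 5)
Your proposal is correct and follows essentially the same route as the paper: upper bound from \Cref{lem:IS7}, and for the lower bound an induction in steps of $3$ from the base constructions $G_3,G_4,G_5$, inserting a copy of $H_3$ at each pair of the planar pairing and using the identity $\lceil (d'+3)/3\rceil=\lceil d'/3\rceil+1$ to track the ratio. The only (harmless) imprecision is the claim that each paired vertex gains ``two edges to $b$ and one to $d$''; in $H_3$ the two independent-set vertices gain degree $3$ in complementary ways, but both gain exactly $3$, which is all that is needed.
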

\begin{proof}
The upper bound holds by \Cref{lem:IS7}.   For the lower bound,
we construct (by induction on $d$) graphs $G_d$
with $n_d$ vertices that have a $d$-independent set $I_d$ of size $\tfrac{4}{d+\lceil d/3 \rceil} (n_d-2)$;
furthermore, $I_d$ has a planar pairing.
We already did this for $d=3,4,5$ above. 
Now consider some $d\geq 6$ and set $d'=d-3$, $G'= G_{d'}$ and $I'=I_{d'}$.
At each pair $\{t,t'\}$ (of the planar pairing of $I'$), insert
a copy of $H_3$, see Figure~\ref{fig:G6}.

\begin{figure}[ht]
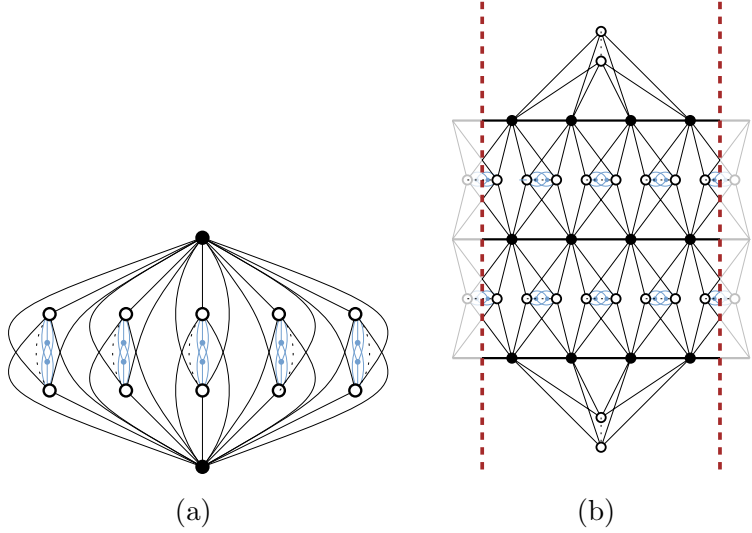

\centering
\subfloat[\centering]{\includegraphics[scale=0.9,page=5]{multiedge.pdf}}
\qquad
\subfloat[\centering]{\includegraphics[scale=0.7,page=4]{standard_construction.pdf}}
\caption{Graphs $G_6$ and $G_7$ are created by taking $G_3$ and $G_4$, respectively, and inserting $H_3$ (light blue) at each pair of the planar pairing.}
\label{fig:G6}
\label{fig:G7}
\end{figure}

In the resulting graph $G_d$, the independent
set $I_d:=I'$ now has degree $d$ or more at all vertices, and $G_d$ has $n=n'+|I'|$ vertices.   Therefore
\begin{align*}
\frac{n-2}{|I_d|} & = \frac{n'-2}{|I'|}+1 
= \frac{\delta'+\lceil \delta'/3 \rceil}{4} + 1
= \frac{(\delta'{+}3) +(\lceil \delta'/3 \rceil{+}1)}{4} 
= \frac{\delta+\lceil \delta/3 \rceil}{4}. \qedhere
\end{align*}
\end{proof}

\subsection{Simple graphs}

For $d=4,5$, the graphs that we constructed in the previous subsection were simple, but for all other
values of $d$ we used graph $H_3$ and therefore parallel edges.   In this subsection, we now construct
simple 1-planar graphs with $d$-independent sets that for $3\leq d\leq 7$ still match the known
upper bounds (up to small additive terms), but leave some gaps for $d\geq 8$.    We begin with
constructions for $d=3,6$.

\begin{lemma}
\label{lem:simple:IS3}
\label{lem:simple:IS6}
For any integer $N$ and $d\in \{3,6\}$ there exists a simple 1-planar graph $S_d$ with $n\geq N$ vertices such that the following holds:
\begin{enumerate}
    \item For $d=3$, $S_3$ has a $3$-independent set of size $\tfrac{6}{7}(n-2)$.
    \item For $d=6$, $S_6$ has a $6$-independent set of size $\tfrac{1}{2}(n-3)$.
\end{enumerate}
\end{lemma}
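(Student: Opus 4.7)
My plan is to treat the two cases $d=3$ and $d=6$ with related constructions, in both cases replacing the multi-edge gadget $H_3$ from the previous subsection by the simple bipartite graph $K_{3,6}$, which is $1$-planar: a standard $1$-planar drawing of $K_{3,6}$ is obtained by superimposing, on the two sides of a segment carrying its three hub-vertices, two $1$-planar drawings of $K_{3,3}$ that share those hubs.

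For $d=3$, I plan to imitate the construction of $G_3$. First I would fix two pole-vertices $a,b\in\overline{I}$, and then, for each $i=1,\dots,s$, create a private hub $c_i\in\overline{I}$ together with six new vertices $t^i_1,\dots,t^i_6\in I$, each adjacent exactly to $\{a,b,c_i\}$; the $i$-th block then spans a copy of $K_{3,6}$ with hubs $\{a,b,c_i\}$. I would arrange the $s$ blocks as pie-slices around the $a$--$b$ axis so that they meet only at $a$ and $b$; inside each slice I place $a,c_i,b$ on a chord and draw the two three-leaf halves on opposite sides of this chord, which realises $K_{3,6}$ $1$-planarly. Counting gives $n=2+s+6s=7s+2$ and $|I|=6s=\tfrac{6}{7}(n-2)$, as required.

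For $d=6$, I plan to use $s+1$ concentric triangles $T_0,T_1,\dots,T_s$, all of whose vertices lie in $\overline{I}$, and insert three new $I$-vertices $u^i_1,u^i_2,u^i_3$ in the annular face between $T_{i-1}$ and $T_i$, each adjacent to all six vertices of $V(T_{i-1})\cup V(T_i)$; each $u^i_j$ then has degree exactly~$6$. Counting gives $n=3(s+1)+3s=6s+3$ and $|I|=3s=\tfrac{1}{2}(n-3)$, matching the bound. The key local step is to verify that the ``triangular annulus gadget'' (two nested triangles together with three degree-$6$ vertices between them) admits a simple $1$-planar drawing: place the $u^i_j$'s on a circle of intermediate radius at the angular positions of the triangle vertices; then the nine edges from the $u^i_j$'s to the outer triangle form a $K_{3,3}$ drawn $1$-planarly in the outer sub-annulus, and symmetrically for the inner sub-annulus, while the six triangle edges themselves sit on the boundary of the annulus and cross nothing.

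The main obstacle will be verifying $1$-planarity and bigon-freeness of the explicit drawings: in the $d=3$ case one has to check that the $K_{3,6}$ pie-slices fit together at the poles $a,b$ without creating extra crossings, and in the $d=6$ case the annular gadget is fairly dense (two $K_{3,3}$-subdrawings sharing three vertices), so one has to confirm that every edge is crossed at most once. Simplicity is immediate from the construction, since all prescribed adjacencies involve pairwise distinct endpoints.
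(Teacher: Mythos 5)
Your $d=6$ construction is exactly the paper's: the paper runs its ``standard construction'' with nested triangles ($k=\tau=3$), three independent vertices of degree $6$ between consecutive triangles and nothing in the end faces, giving $n=6s-3$ and $|I|=\tfrac12(n-3)$; your version is the same up to re-indexing the number of triangles. Your $d=3$ construction is genuinely different: the paper again uses nested triangles, now with $18$ degree-$3$ vertices between consecutive triangles and three more in each end face ($n=21s-12$, $|I|=\tfrac67(n-2)$), whereas you glue $s$ copies of $K_{3,6}$ along two common poles $a,b$; your count $n=7s+2$, $|I|=6s=\tfrac67(n-2)$ is right, the graph is simple, every $I$-vertex has degree exactly $3$, and as a bonus your graph even has minimum degree $3$, like the paper's $S_3$. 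The one point you correctly flag as the main obstacle --- and should nail down --- is the same in both of your cases: that $K_{3,3}$ with parts $\{a,c,b\}$ and $\{t_1,t_2,t_3\}$ admits a $1$-planar drawing inside a closed disk (half-lens, sub-annulus) with $a,c,b$ on the boundary circle and the $t_j$ inside. This is \emph{not} delivered by the usual one-crossing drawing of $K_{3,3}$: one can check that in any such constrained drawing no $t_j$ can have more than one uncrossed edge, so at least three crossings are forced, and naive placements (e.g.\ all three $t_j$ in one region) force some edge to be crossed twice. It does work with the ``pinwheel'' drawing that your $d=6$ description already implicitly uses: place $t_j$ radially opposite $x_j$ for a bijection $j\mapsto x_j\in\{a,b,c\}$, draw the three matching edges $t_jx_j$ as uncrossed radial arcs, and in each of the three sectors bounded by two consecutive radial arcs draw the two remaining edges as the two diagonals of that curved quadrilateral, which cross each other exactly once; this yields three crossings, each edge crossed at most once, entirely inside the annulus between the two circles. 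With that drawing made explicit, both your pie-slice assembly for $d=3$ (the slices meet only at $a,b$, each in its own angular sector) and your annulus stacking for $d=6$ go through, so the proposal is correct; what the paper's $d=3$ construction buys instead is uniformity with its other cases (it reuses the same nested-cycle template for $d=3,\dots,18$), while yours is shorter and more self-contained for this single value of $d$.
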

\begin{proof}
We use the standard-construction, with parameter $s$ large
enough so that the final graph has at least $N$ vertices.    The choice of parameters $k,\tau$ and the
modification at the end faces depends on $d\in \{3,6\}$
as follows:
\begin{itemize}
\item For $d=3$, we use $k=3$ (so nested triangles), and $\tau=18$.
	Into each end-face, we add three more vertices of $I$ that we make adjacent to all three
	vertices of the nested triangle that bounds the face.    See Figure~\ref{fig:S3}(a). 
	With this construction, we have $n=3s+18(s{-}1)+6 = 21s-12$ and $|I|=18(s-1)+6=18s-12=\tfrac{6}{7}(21s-14)=\tfrac{6}{7}(n-2)$.
\item For $d=6$ we use $k=\tau=3$ and do not add anything in the end-faces.  See Figure~\ref{fig:S6}(b).
	With this construction, we have $n=3s+3(s{-}1) = 6s-3$ and $|I|=3(s-1)=3s-3=\tfrac{1}{2}(6s-6)=\tfrac{1}{2}(n-3)$. \qedhere
\end{itemize}
\end{proof}

\begin{figure}[H]
    \centering
    \subfloat[\centering ]{{\includegraphics[scale=0.2,height=6cm]{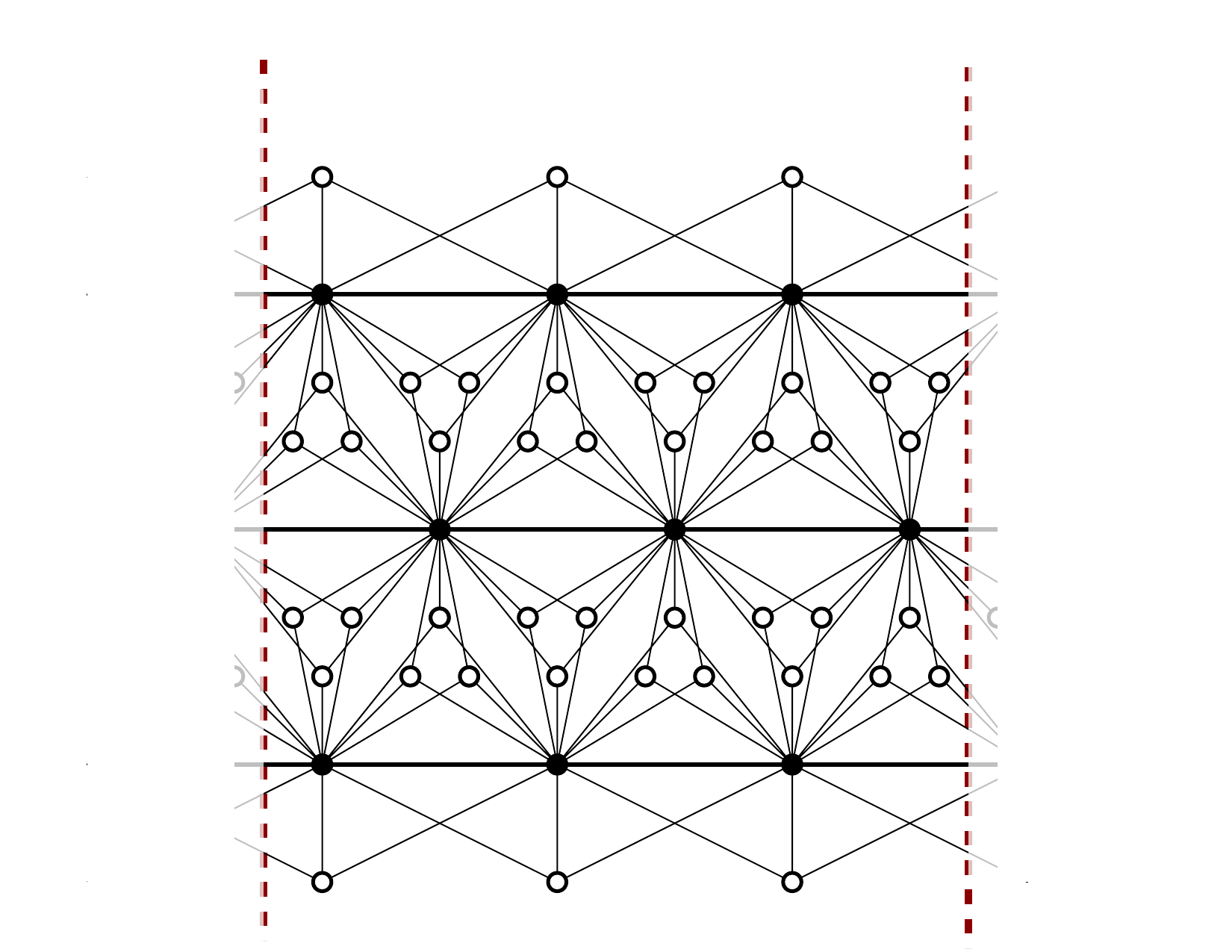} }}%
    \subfloat[\centering]{{\includegraphics[scale=0.6,trim=-20 0 20 0,clip,page=1]{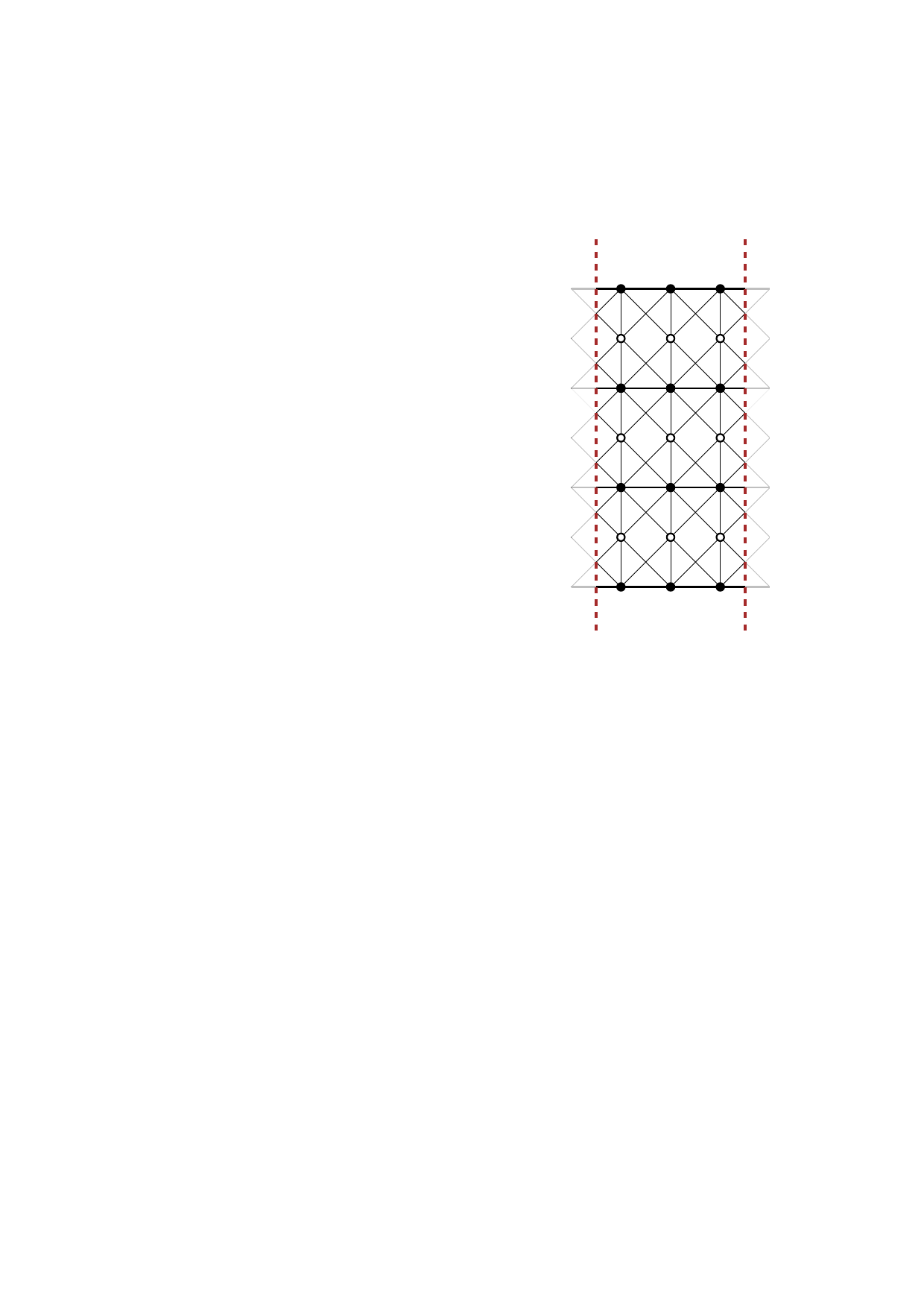} }}%
    \subfloat[\centering ]{{
	\includegraphics[scale=0.6,trim=0 0 0 0,clip,page=2]{delta=high.pdf} 
}}%
    \caption{Simple graphs for the $d$-independence number, for $d=3$ ($s=3$), $d=6$ ($s=4$) and $d=7$ ($s=5$).
    }
	\label{fig:S3}
	\label{fig:S6}
	\label{fig:S7}
\end{figure}

Next, we consider the case $d\in \{7,13,18\}$.  
For $d=13,18$, we need to construct 1-planar graphs that not only have a planar pairing among their independent set $I$, but that  have one more property
that we call a 
\emph{claw-cover}:   $|I|$ is divisible by 3, and there is  a set $C$ of $|I|/3$ vertices in $\overline{I}$  such that $C\cup I$ induces a set of $K_{1,3}$'s, i.e., each vertex in $C$ is incident to exactly three vertices of $I$, and each vertex in $I$ is incident to exactly one vertex of $C$.  

\begin{lemma}
\label{lem:mindeg1318:1sided}
For any integer $N$ and $d\in \{7,13,18\}$, there exists a simple 1-planar graph $S_d$ with $n\geq N$ vertices such that the following holds:
\begin{enumerate}
    \item For $d=7$, $S_7$ has a $7$-independent set of size $\tfrac{2}{5}(n-3)$ with a planar pairing.
    \item For $d=13$, $S_{13}$ has a $13$-independent set of size $\tfrac{1}{5}(n-6)$ with a planar pairing and a claw-cover.
    \item For $d=18$, $S_{18}$ has an $18$-independent set of size $\tfrac{1}{7}(n-6)$ with a planar pairing and a claw-cover.
\end{enumerate}
\end{lemma}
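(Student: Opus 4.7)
My plan is to apply the standard construction from Section~\ref{sec:constructions} with carefully chosen parameters $(k,\tau)$ in each of the three cases, inserting dedicated vertices of $\overline{I}$ in the $d=13,18$ constructions so that the claw-cover is obtained automatically, and fine-tuning the end faces to realize the claimed additive constants.

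For $d=7$, I would take nested $6$-cycles ($k=6$) with $\tau=4$ vertices of $I$ placed in each middle face. Each such $I$-vertex is joined to four consecutive vertices on one bounding $6$-cycle and three consecutive vertices on the other; this yields degree $7$ and an asymptotic density $\tau/(\tau+k)=4/10=2/5$. The end faces are handled by a small gadget (similar to the end-face modifications of \Cref{lem:simple:IS3}) that adjusts the vertex count by an $O(1)$ amount so that the exact formula $\tfrac{2}{5}(n-3)$ is realized. The planar pairing is the obvious one: in each middle face, pair two consecutive $I$-vertices using an uncrossed chord drawn through the interior of that face.

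For $d=13$ and $d=18$, I would use the same nested-cycle skeleton but additionally place, in each middle face, one dedicated \emph{claw-center} vertex of $\overline{I}$ made adjacent to all three $I$-vertices of that face (taking $\tau=3$). This yields at once a set $C$ of size $|I|/3$ witnessing the claw-cover, and also contributes one edge to the degree of each $I$-vertex, so the two bounding cycles only have to supply the remaining $d-1$ edges, split as evenly as possible. Concretely, for $d=13$ I would take $k=11$ (each $I$-vertex then has degree $6+6+1=13$), and for $d=18$ I would take $k=17$ (degree $9+8+1=18$). The asymptotic density in both cases is $\tau/(k+\tau+\tau/3)=3/(k+4)$, which evaluates to $1/5$ when $k=11$ and to $1/7$ when $k=17$, matching the claims.

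The main obstacle will be the exact additive constant $-6$ in $\tfrac{1}{5}(n-6)$ and $\tfrac{1}{7}(n-6)$: the end-face tuning must simultaneously keep $|I|$ divisible by $3$ so that the claw-cover remains well-defined, provide the end-face $I$-vertices with their full degree $d$ even though they border only one cycle, and keep the drawing simple and 1-planar. I expect this can be handled by placing an extra triple of $I$-vertices together with a new claw-center inside each end face, and compensating for the missing cycle neighbors by a small number of additional $\overline{I}$-vertices adjacent to the end-face $I$-vertices; the arithmetic then works out to exactly the claimed constant. Checking simplicity and 1-planarity of the middle-face drawings themselves is routine and parallels the checks already carried out in \Cref{lem:simple:IS6}.
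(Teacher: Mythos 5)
Your parameter choices get the densities right ($4/(6+4)=2/5$, $3/(11+3+1)=1/5$, $3/(17+3+1)=1/7$), and the $d=7$ case is essentially workable (though without a concrete end gadget you only get $\tfrac{2}{5}(n-6)$ rather than $\tfrac{2}{5}(n-3)$; note that a vertex in an end face sees only one bounding $6$-cycle, so it cannot reach degree $7$ without extra $\overline{I}$-vertices, and two $I$-vertices each adjacent to all six vertices of the bounding hexagon is already not $1$-planar). The serious problems are in the $d=13,18$ cases, and they are not "routine verification": they are exactly the obstructions the paper's parameter choices are designed to avoid. With a uniform cycle length $k=11$ and three fans of $6$ consecutive neighbours on each bounding $11$-cycle, the fans occupy $18$ slots on $11$ vertices, so some two consecutive $I$-vertices $t_1,t_2$ must share at least $3$ consecutive cycle-neighbours $a,b,c$. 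In any planar embedding of the resulting $K_{2,3}$ every face contains only two of $a,b,c$, so with all three pinned to the bounding cycle a crossing is forced; and because $t_1$ has further fan edges on one side of $\{a,b,c\}$, $t_2$ on the other, and both have six-edge fans to the opposite cycle blocking any detour "underneath", one of the six $K_{2,3}$ edges ends up crossed at least twice. For $k=17$ with fans of $9$ and $8$ the forced overlaps are $\geq 4$ and $\geq 3$, which is worse. The paper alternates cycle lengths ($6$ and $18$) precisely so that the three fans tile the long cycle with essentially no overlap, and handles the short cycle by a completely different (spiral-type) routing that only works because each $I$-vertex has few edges on that side. Choosing the cycle lengths to make the fans fit is the crux of the construction, not an afterthought.

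Two further gaps. First, your dedicated claw-center does not embed: it sits somewhere in the annulus while the three $I$-vertices of its face are spread around it, each shielded by large fans to both bounding cycles, so the claw-center's edge to the "far" $I$-vertex must cross an entire fan (many crossings on one edge). The paper instead takes the claw-centers to be \emph{existing} cycle vertices that the three $I$-vertices are already adjacent to, so no new edges need to be routed. Second, you never address the planar pairing for $d=13,18$: with $\tau=3$ the three $I$-vertices of a middle face cannot be paired within that face, so pairs must straddle nested cycles, and an uncrossed pairing edge then has to pass through a cycle; this is why the paper explicitly omits selected edges of the nested cycles (and arranges for $|I|$ to be even). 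As written, your plan for $d=13,18$ would fail at the drawing stage, and the missing ideas (alternating cycle lengths, cycle vertices as claw-centers, deleted cycle edges for the pairing) are the actual content of the lemma's proof.
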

\begin{proof}
We use the standard construction with two modifications: we omit some of the edges of the nested cycles (to permit planar pairings), and we sometimes alternate the lengths of the nested cycles. As always we choose the number $s$ of nested cycles large
enough so that the final graph has at least $N$ vertices.    The length(s) of the nested cycles, the parameter $\tau$ and the
modification at the end faces depends on $d\in \{7,13,18\}$ as follows:
\begin{itemize}
\item For $d=7$, we use an odd number $s$ of nested cycles that alternate between length 3 and length 6 
(beginning and ending with length 3).   We also use $\tau=3$ and do not add anything in the end-faces.
See Figure~\ref{fig:S7}(c) for the construction and an illustration of the planar pairing.   
With this construction, we have $n=3\tfrac{s+1}{2}+6\tfrac{s-1}{2}+3(s-1)=\tfrac{15}{2}s-\tfrac{9}{2}$ and $|I|=3(s-1)=\tfrac{2}{5}(\tfrac{15}{2}s-\tfrac{15}{2})=\tfrac{2}{5}(n-3)$.
\item
For $d=13$, we use an odd number $s$ of nested cycles that alternate between length 6 and length 18
(beginning and ending with length 6). We also use $\tau=3$, and do not add anything in the end-faces. 
See Figure~\ref{fig:S13}(a) for the construction and an illustration of the planar pairing as well as the claw-cover.    
With this construction, we have $n=6\tfrac{s+1}{2}+18\tfrac{s-1}{2}+3(s-1)=15s-9$ and $|I|=3(s-1)=\tfrac{1}{5}(15s-15)=\tfrac{1}{5}(n-6)$.

\item
For $d=18$, we use $s$ nested 18-cycles, and use $\tau=3$. Into each end face, we add three more vertices of $I$ and 12 more vertices of $\overline{I}$.  
See Figure~\ref{fig:S18}(b) for the construction and an illustration of the planar pairing and the claw-cover.  
With this construction, we have $n=18s+3(s-1)+30=21s+27$ 
and $|I| =3(s-1)+6=3s+3 =\tfrac{1}{7}(21s+21)=\tfrac{1}{7}(n-6)$. \qedhere
\end{itemize}
\end{proof}

\begin{figure}[H]
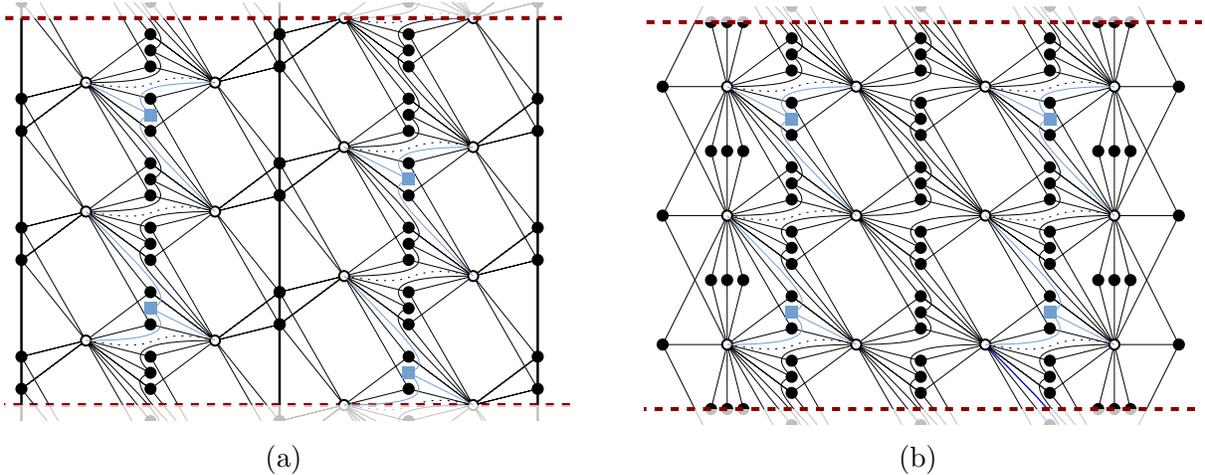

    \centering 
    \subfloat[\centering]{{\includegraphics[scale=0.76,trim=70 0 60 40,clip,page=8,angle=90]{delta=high.pdf} }}%
    \qquad \subfloat[\centering ]{{\includegraphics[scale=0.76,trim=40 30 30 40,clip,page=9,angle=90]{delta=high.pdf} }}%
    \caption{Simple graphs for the $d$-independence number, for $d=13$ ($s=5$), and $d=18$ ($s=3$).
The claw-cover is indicated by light blue squares.
For ease of reading, we now show the construction on the rolling cylinder, rather than the standing one.
}
    \label{fig:S13}
    \label{fig:S18}
\end{figure}

\paragraph{Exploiting planar pairings and claw-covers:}
We have already seen in the proof of \Cref{thm:lb:bigonfree} how to use a planar
pairing to increase degrees by inserting a small subgraph.   We now do the
same here, but with a different (simple) subgraph.    We also can use claw-covers
to decrease degrees.   We first give two abstract results that explain exactly
how these operations affect the size of independent sets.

\begin{claim}
\label{cl:pairing}
Let $H$ be a simple 1-planar graph with $n'$ vertices that has (for some $A,B,d'$) a $d'$-independent set $I'$ of size $\tfrac{2}{d'-A}(n'-B)$ that has a planar pairing.
Then for any $d\geq d'$ there exists a simple 1-planar graph $G$ with $n\geq n'$ vertices and a  $d$-independent set $I$ of size $\tfrac{2}{d-A}(n-B)$.
\end{claim}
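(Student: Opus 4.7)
My plan is to construct $G$ from $H$ by inserting a local gadget at each pair of the planar pairing. Set $k := d - d'$ and enumerate the planar pairing as $\{t_1,t_2\},\ldots,\{t_{2\ell-1},t_{2\ell}\}$, where $\ell = |I'|/2$. For each pair $\{t_{2i-1},t_{2i}\}$ I introduce $k$ fresh vertices $v^{(i)}_1,\ldots,v^{(i)}_k$ and join each of them to both endpoints of the pair, yielding a copy of $K_{2,k}$ at each pair. The $d$-independent set in the new graph will be $I := I'$ itself.

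To realize $G$ as a simple $1$-plane graph, recall that the planar pairing furnishes $\ell$ pairwise-disjoint simple curves $\gamma_1,\ldots,\gamma_\ell$ in the given $1$-planar drawing of $H$, with $\gamma_i$ running from $t_{2i-1}$ to $t_{2i}$ and crossing no edge of $H$. A sufficiently thin open neighbourhood $D_i$ of $\gamma_i$ is a topological disk meeting the rest of the drawing only at $\{t_{2i-1}, t_{2i}\}$. Now $K_{2,k}$ admits a planar embedding with both degree-$k$ vertices on the outer face (place $v^{(i)}_1,\ldots,v^{(i)}_k$ in a row between $t_{2i-1}$ and $t_{2i}$, with each edge drawn as a simple arc), so the $i$-th gadget can be drawn entirely inside $D_i$ without any crossings. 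All new edges are uncrossed and no existing crossing is disturbed, so $G$ is $1$-planar. Simplicity holds because each new vertex is fresh, so no parallel edges are created; in particular the edge $(t_{2i-1},t_{2i})$ is never added, and none existed in $H$ anyway because $I'$ is independent.

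The set $I$ remains independent in $G$ (every new edge is incident to some $v^{(i)}_j \notin I$), and each vertex of $I$ gains exactly $k$ new neighbours, so its degree in $G$ is at least $d' + k = d$. The counting is routine: $n = n' + k\ell = n' + k|I'|/2$, and combining this with $|I'| = \tfrac{2}{d'-A}(n'-B)$ gives
\[
\tfrac{2}{d-A}(n-B) \;=\; \tfrac{2(n'-B)}{d-A}\Bigl(1 + \tfrac{k}{d'-A}\Bigr) \;=\; \tfrac{2(n'-B)}{d'-A} \;=\; |I'| \;=\; |I|,
\]
while $n \geq n'$ follows from $k \geq 0$. The only potentially delicate step is the topological drawing argument --- precisely fitting the $K_{2,k}$ inside a thin neighbourhood of $\gamma_i$ --- which is standard once one observes that $K_{2,k}$ has a planar embedding with both $t_{2i-1}$ and $t_{2i}$ on the outer face.
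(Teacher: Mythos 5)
Your proposal is correct and follows essentially the same route as the paper: insert a $K_{2,d-d'}$ gadget at each pair of the planar pairing and run the same degree/size count (the paper merely cites a figure where you spell out the topological embedding argument). Nothing further is needed.
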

\begin{proof}
At any pair $\{t,t'\}$ of the planar pairing, insert $K_{2,x}$
for $x=d{-}d'$, i.e., 
add $x$ new vertices and make them adjacent to both $t$ and $t'$, see also Figure~\ref{fig:pair_claw}(a).
In the resulting graph $G$, all vertices of the independent
set $I:=I'$ have degree $d$ or more, and $G$ has $n=n'+\tfrac{1}{2}|I'|x$ vertices.   Therefore
\begin{align*}
& \frac{n-B}{|I|}= \frac{n'-B}{|I'|} + \frac{1}{2}x = \frac{d'-A}{2} + \frac{x}{2} = \frac{d'+x-A}{2} = \frac{d-A}{2}. \qedhere
\end{align*}
\end{proof}

\begin{claim}
\label{cl:claw}
Let $H$ be a simple 1-planar graph with $n'$ vertices and (for some $A,B,d'$) a $d'$-independent set $I'$ of size $\tfrac{3}{d'+A}(n'-B)$
that has a claw-cover.
Then for $d=d'{-}1$ there exists a simple 1-planar graph $G$ with $n= n'-|I|/3$ vertices and a $d$-independent set $I$ of size $\tfrac{3}{d+A}(n-B)$.
\end{claim}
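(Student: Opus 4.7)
The plan is to obtain $G$ from $H$ by simply deleting the vertices of the claw-cover. Concretely, let $C \subseteq \overline{I'}$ be the claw-cover of size $|I'|/3$ guaranteed by the hypothesis, and set $G := H - C$, equipped with the restriction of the fixed 1-planar drawing of $H$ to the remaining vertices and edges. Since $H$ is simple and 1-planar, deleting vertices (and their incident edges) produces a simple 1-planar graph. Define the new independent set by $I := I'$; this is still independent in $G$ because we only deleted vertices from $\overline{I'}$, which cannot introduce any edges among vertices of $I'$.

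Next I would verify the degree condition. Take any $t \in I$. In $H$ we have $\deg_H(t) \geq d'$, and by the claw-cover property $t$ is incident to \emph{exactly one} vertex of $C$. Deleting $C$ therefore removes exactly one edge at $t$, so $\deg_G(t) \geq d' - 1 = d$. Hence $I$ is a $d$-independent set in $G$.

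It remains to compute the vertex count. By construction $n := |V(G)| = n' - |C| = n' - |I'|/3 = n' - |I|/3$, which is the claimed equality. Using $|I'| = \tfrac{3}{d'+A}(n'-B)$, i.e.\ $n' - B = |I|(d'+A)/3$, we obtain
\begin{equation*}
n - B \;=\; (n' - B) - \tfrac{|I|}{3} \;=\; \tfrac{|I|(d'+A)}{3} - \tfrac{|I|}{3} \;=\; \tfrac{|I|(d'-1+A)}{3} \;=\; \tfrac{|I|(d+A)}{3},
\end{equation*}
so $|I| = \tfrac{3}{d+A}(n-B)$, as required.

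The proof is essentially bookkeeping; the only substantive point is the observation that a claw-cover lets us shave off exactly one unit of degree from every vertex of $I$ at the cost of removing only $|I|/3$ vertices from $\overline{I}$, which is exactly the ratio that makes the identity $|I| = \tfrac{3}{d+A}(n-B)$ propagate from $d'$ to $d = d'-1$. The potential obstacle is verifying that $G$ remains simple and that no undesirable structure (e.g.\ a parallel edge) is created by the deletion, but since deletion can only remove edges, this is automatic.
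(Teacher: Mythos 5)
Your proposal is correct and follows exactly the paper's argument: delete the claw-cover, keep $I:=I'$, note each vertex of $I$ loses exactly one neighbour so its degree drops from at least $d'$ to at least $d=d'-1$, and then the same ratio computation $\tfrac{n-B}{|I|}=\tfrac{n'-B}{|I'|}-\tfrac{1}{3}=\tfrac{d+A}{3}$ gives the claimed size. The only difference is that you spell out the degree bookkeeping and the preservation of simplicity and 1-planarity more explicitly than the paper does.
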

\begin{proof}
Delete the $|I|/3$ vertices of $\overline{I}$ that belong to the claw-cover, See also Figure~\ref{fig:pair_claw}(b).
In the resulting graph $G$, the independent
set $I:= I'$ now has minimum degree $d$ and $G$ has $n=n'-\tfrac{1}{3}|I'|$ vertices.   Therefore
\begin{align*}
& \frac{n-B}{|I|}= \frac{n'-B}{|I'|} - \frac{1}{3} = \frac{d'+A}{3} - \frac{1}{3} = \frac{d'-1+A}{3} = \frac{d+A}{3}. \qedhere
\end{align*}
\end{proof}

\begin{figure}[H]
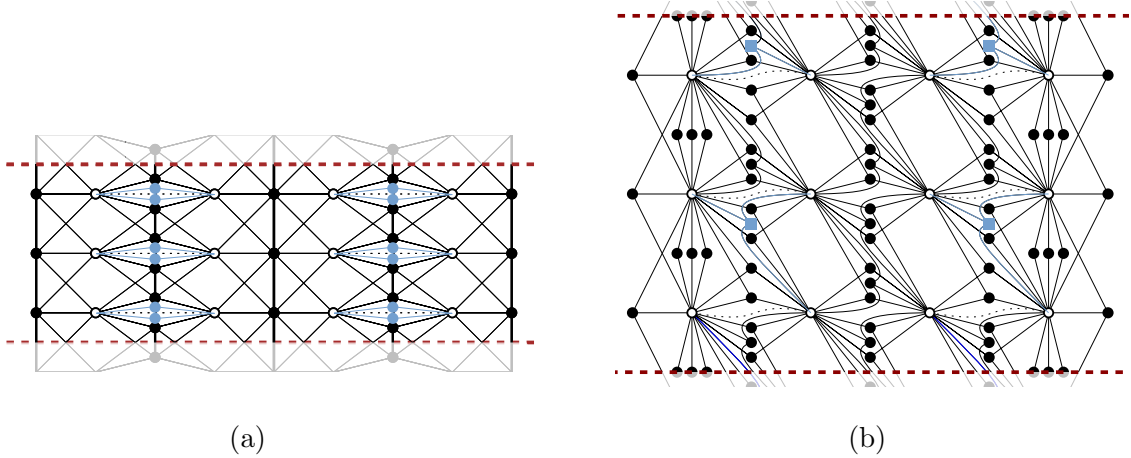

    \centering
    \subfloat[\centering]{{\includegraphics[scale=0.7,trim=30 0 30 40,clip,page=3,angle=90]{delta=high.pdf} }}%
    \qquad
    \subfloat[\centering]{{\includegraphics[scale=0.7,trim=30 30 30 40,clip,page=10,angle=90]{delta=high.pdf} }}%
\caption{(a) Adding $K_{1,2}$ at the planar pairing of $S_7$ to get a simple 1-planar graph with a large 9-independent set.
(b) Removing the claw-cover from $S_{18}$ to get a simple 1-planar graph with a large 17-independent set
	(with the new claw-cover in blue squares).}	
    \label{fig:pair_claw}
\end{figure}

\paragraph{Filling Table~\ref{ta:resultsNew}:}
With this, we are ready to fill all entries 
of Table~\ref{ta:resultsNew}, i.e., lower bounds on $\alpha_d(G)$ for a simple 1-planar graph $G$.
We already had constructions that achieve the given bounds for $d\in \{3,4,5,6,7,13,18\}$.   The
remaining entries can be filled as follows:
\begin{itemize}
\item For $d=8,9,10,11$, use $S_7$ (which for $d'=7$ has a $d'$-independent set of size $\tfrac{2}{5}(n-3)=\tfrac{2}{d'-2}(n-3)$ with a planar pairing),
	and apply Claim~\ref{cl:pairing} to get a $d$-independent set of size $\tfrac{2}{d-2}(n-3)$.
\item For $d=12$, use $S_{13}$ (which for $d'=13$ has a $d'$-independent set of size $\tfrac{1}{5}(n-6)=\tfrac{3}{d'+2}(n-6)$ with a claw-cover)
	and apply Claim~\ref{cl:claw} to get a $d$-independent set of size $\tfrac{3}{d+2}(n-6)$.
\item For $d=14,15$, use $S_{13}$ (which for $d'=13$ has a $d'$-independent set of size $\tfrac{1}{5}(n-6)=\tfrac{2}{d'-3}(n-6)$ with a planar pairing)
	and apply Claim~\ref{cl:pairing} to get a $d$-independent set of size $\tfrac{2}{d-3}(n-6)$.
\item For $d=17$, use $S_{18}$ (which for $d'=18$ has a $d'$-independent set of size $\tfrac{1}{7}(n-6)=\tfrac{3}{d'+3}(n-6)$ with a claw cover),
	and apply Claim~\ref{cl:claw} to get a $d$-independent set of size $\tfrac{3}{d+3}(n-6)$.  One verifies that this independent set again has a claw-cover
	(see Figure~\ref{fig:pair_claw}(b)) so we can repeat the argument to get the same bound for $d=16$.
\item For $d\geq 19$, use $S_{18}$ (which for $d'=18$ has a $d'$-independent set of size $\tfrac{1}{7}(n-6)=\tfrac{2}{d'-4}(n-6)$ with a planar pairing),
	and apply Claim~\ref{cl:pairing} to get an independent set of size $\tfrac{2}{d-4}(n-6)$.
\end{itemize}

\subsection{Graphs with minimum degree \texorpdfstring{$d$}{d}} \label{sec:min-deg}

The simple 1-planar graphs $S_3$ (from Figure~\ref{fig:S3}) and
$G_4,G_5$ (from Figure~\ref{fig:G4}) 
not only had large $d$-independent sets for $d=3,4,5$,
but they had an even stronger property: They also had minimum degree $d$.    The constructed
graph $S_6$ (from Figure~\ref{fig:S6}) had only six vertices of degree 5, but the constructions 
for $d\geq 7$ had many vertices of degree less than $d$.    In this section, we briefly discuss what lower bounds on
the $d$-independence number we can achieve if we additionally require graphs to have minimum
degree $d$.    For $d=3,4,5$ the answer is obviously the same as before (i.e., $\tfrac{6}{7}(n-2)$,
$\tfrac{2}{3}(n-3)$ and $\tfrac{4}{7}(n-2)$).
But for $d=6,7$, we need to construct new graphs.   (The question is moot for $d\geq 8$ since
there are no 1-planar graphs with minimum degree $d\geq 8$ unless we have loops or bigons.)

For $d=6$, we only need a very small change.   Previously, we used the standard-construction
with $k=3$ (i.e., nested triangles) and inserted $\tau=3$ vertices of an independent set into
each middle face.   Let $M_6$ be the graph obtained if instead we
use $k=4$ (i.e., nested quadrangles), $\tau=4$, and insert a pair of crossing edges into each end-face, see Figure~\ref{fig:G6m}.    If there are $s$ nested quadrangles, then $M_6$ has
$n=4s+4(s-1)=8s-4$ vertices and an independent set of size $4(s-1)=4s-4=\tfrac{1}{2}(n-4)$. 

\begin{figure}[ht]%
\centering
\includegraphics[scale=0.6,page=4,angle=90]{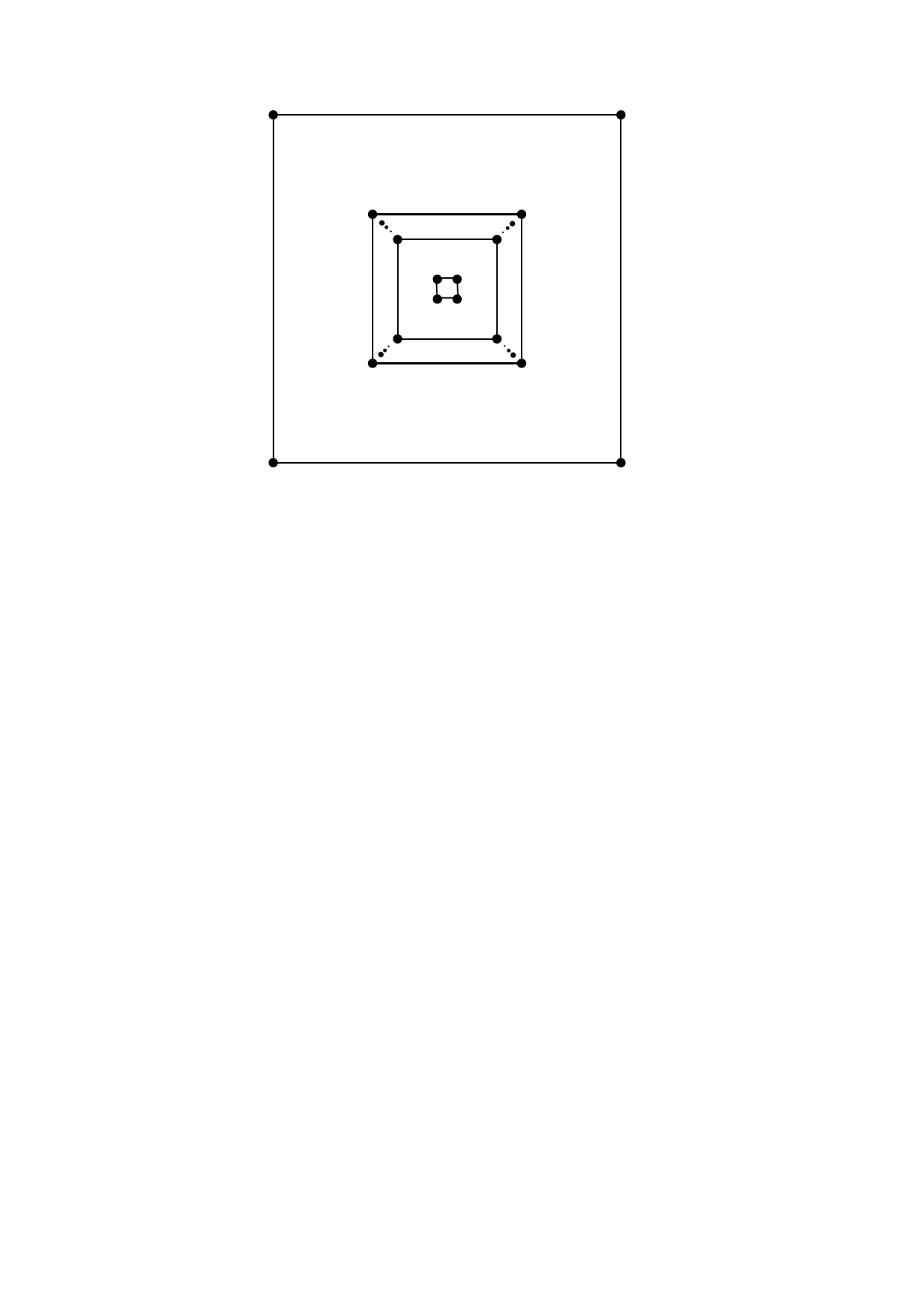}
     \caption{A graph $M_6$ with minimum degree 6 and an independent set of size $(n-4)/2$.  } 
    \label{fig:G6m}
\end{figure}

For minimum degree 7, we need to do significantly more work; in particular, it does
not seem possible to use the standard construction with nested cycles of constant length.
Instead, construct graphs of minimum degree 7 in two parts: first 
we give, by induction, a construction
with desirable properties and arbitrary size, and
then combine two such
constructions into a graph with a minimum degree 7.

\begin{claim}
For all $\ell\geq 0$, there exists a $1$-planar graph $M_7^{(\ell)}$ 
with $27\cdot 2^{\ell} - 9$
vertices and an independent set $I$ with $9\cdot 2^{\ell}-6$ vertices
such that (in some $1$-planar drawing)
\begin{itemize}
\itemsep -2pt
\item the outer-face is a cycle of $9\cdot 2^{\ell}$ vertices and these vertices have degree 4,
\item all other vertices have degree $7$,
\item no vertex of $I$ is on the outer-face.
\end{itemize}
\end{claim}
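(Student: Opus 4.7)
The plan is to proceed by induction on $\ell$, constructing $M_7^{(\ell+1)}$ from $M_7^{(\ell)}$ by wrapping a new ``ring'' of appropriate width around the existing outer cycle. The nested cycles grow in length (doubling each time), which is the essential reason the standard constant-width construction cannot be used here.

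\textbf{Base case} ($\ell=0$). We must exhibit an explicit 18-vertex 1-planar graph with a 9-cycle outer face whose vertices all have degree $4$, nine interior vertices of degree $7$, and a $3$-element independent set among them. This is small enough to produce by hand: start with the outer 9-cycle, attach an annular layer carrying three $I$-vertices and six auxiliary vertices, and tune the chord/crossing pattern so that each outer vertex picks up exactly two extra edges and each interior vertex ends up with degree $7$.

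\textbf{Inductive step} ($\ell\to\ell+1$). Set $m=9\cdot 2^\ell$, so $M_7^{(\ell)}$ has outer $m$-cycle $C=(u_0,\dots,u_{m-1})$. I would add a new ring of $3m$ vertices outside $C$: namely $m$ new independent-set vertices $t_0,\dots,t_{m-1}$ and a new outer cycle of length $2m$ on vertices $w_0,\dots,w_{2m-1}$. This precisely accounts for the required jumps in $|V|$ from $3m-9$ to $6m-9$ and in $|I|$ from $m-6$ to $2m-6$. The combinatorial design of the ring is: taking indices cyclically, make $t_j$ adjacent to $u_{j-1},u_j,u_{j+1}$ on the inner cycle and to $w_{2j-1},w_{2j},w_{2j+1},w_{2j+2}$ on the outer cycle, and let the new outer cycle consist of the edges $(w_k,w_{k+1})$. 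A direct count then shows that each $u_i$ gains exactly three ring edges (to $t_{i-1},t_i,t_{i+1}$), raising its degree from $4$ to $7$; each $w_k$ has two cycle neighbors plus two $t$-neighbors, giving degree $4$; and each $t_j$ has degree $3+4=7$, matching the inductive invariants. The old independent set $I$ acquires no new edges (ring edges touch only $C$ and the new outer cycle), so $I\cup\{t_0,\dots,t_{m-1}\}$ remains independent, lies entirely in the interior, and has the right cardinality.

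For the drawing I would place $t_j$ in the angular sector between $u_j$ and $u_{j+1}$ and place $w_{2j+1}$ on a larger circle radially outside $t_j$. The five ``short'' edges $(t_j,u_j)$, $(t_j,u_{j+1})$, $(t_j,w_{2j})$, $(t_j,w_{2j+1})$, $(t_j,w_{2j+2})$ all fit inside that wedge and are drawn without crossings; the two ``long'' edges $(t_j,u_{j-1})$ and $(t_j,w_{2j-1})$ reach into the $t_{j-1}$-wedge and can be drawn so that each picks up exactly one crossing with one of $t_{j-1}$'s short edges. Since no $t_j$ lies on the new outer face, the invariant about $I$ and outer-face membership is preserved. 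The main obstacle is this 1-planarity verification for the ring: the degree arithmetic is routine, but certifying that the radial layout produces at most one crossing per edge, with correct interactions across adjacent wedges (and no inadvertent bigons), is the essential geometric content, and in the final write-up would be accompanied by a picture.
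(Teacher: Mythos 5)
Your inductive step is essentially the paper's: a ring of $m=9\cdot 2^{\ell}$ new independent vertices, each joined to three consecutive vertices of the old outer cycle and to four vertices of a new outer $2m$-cycle; your degree and cardinality bookkeeping is correct, and the wedge layout with one crossing between the two ``long'' edges of consecutive wedges is exactly what the paper's figure depicts. The one piece of genuine content you have not discharged is the base case, and there is a concrete obstruction to producing it as you describe: a graph with nine vertices of degree $4$ and nine vertices of degree $7$ has odd degree sum ($36+63=99$), so no amount of tuning the chord/crossing pattern can realize those exact degrees. (The same parity objection applies to the claim's literal statement for every $\ell$, since $36\cdot 2^{\ell}+7\,(18\cdot 2^{\ell}-9)$ is always odd.) The paper sidesteps this by pointing at an explicit 18-vertex figure, and what the downstream application (the minimum-degree-7 lemma) actually needs is only that non-outer vertices have degree \emph{at least} $7$ and outer vertices degree \emph{at least} $4$ (gaining three more later). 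So your write-up must either exhibit a concrete base graph in which at least one vertex deviates upward from the stated degree, or weaken the degree conditions in the invariant to inequalities; your inductive step preserves that weaker invariant just as well, since it never touches the degrees of interior vertices and raises each old outer vertex by exactly three.
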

\begin{proof}
For the base case (${\ell}=0$), we need a graph with 18 vertices of which
three form an independent set; see Figure~\ref{fig:M7}(a) for such a graph that 
satisfies
all conditions.

Now assume that we have graph $M_7^{({\ell})}$ with $27\cdot 2^{\ell}-9$ vertices and $9\cdot 2^{\ell}$ vertices on the
outer-face $F_{\ell}$.  
We show how to construct $M_7^{({\ell}+1)}$ meeting all conditions.
Insert $9\cdot 2^{\ell}$ new vertices
in $F_{\ell}$ (let $I_{{\ell}+1}$ be the set of added vertices) and make each of them adjacent
to three vertices of $F_{\ell}$; \Cref{fig:M7}(b) shows that this
can be done while retaining $1$-planarity and keeping $I_{{\ell}+1}$
on the outer-face.   
With this, all vertices
in $F_{\ell}$ receive three more neighbours and hence now have degree~7.
Insert $18\cdot 2^{\ell}$ new vertices into the outer-face of the resulting
graph, and connect them in a cycle that will form the outer-face  $F_{{\ell}+1}$
of $M_7^{({\ell}+1)}$. 
Make each vertex of 
$I_{{\ell}+1}$ adjacent to four vertices of $F_{{\ell}+1}$; the figure shows that
this
can be done while remaining $1$-planar.   
Also, with this all vertices
on $F_{{\ell}+1}$ receive two neighbours in $I_{{\ell}+1}$; this plus the cycle
among them ensures that they have degree~4 while all other vertices
have degree~7.   
As desired $I_{{\ell}+1}$ forms an independent set
and has no edges to vertices of the independent set $I_\ell$
of $M_7^{({\ell})}$ 
since those are not on $F_{\ell}$ by the inductive hypothesis.

It remains to verify the claim on the size.   
Independent set $I_{\ell}\cup I_{{\ell}+1}$ has size
$9\cdot 2^{\ell}-6+9\cdot 2^{\ell} = 9\cdot 2^{{\ell}+1}-6$.    The outer-face
$F_{{\ell}+1}$ of $M_7^{({\ell}+1)}$ has $18\cdot 2^{\ell}=9\cdot 2^{{\ell}+1}$ vertices,
and finally $|V(M_7^{({\ell}+1)})|=|V(M_7^{({\ell})})|+|I_{{\ell}+1}|+|F_{{\ell}+1}|
=  27\cdot 2^{\ell}-9 + 9\cdot 2^{\ell} + 18 \cdot 2^{\ell} = 27\cdot 2^{{\ell}+1}-9$.
\end{proof}

    \begin{figure}[ht]%
    \centering
\subfloat[\centering ]{\includegraphics[angle=-90,scale=0.7,page=4,trim=0 40 0 45,clip]{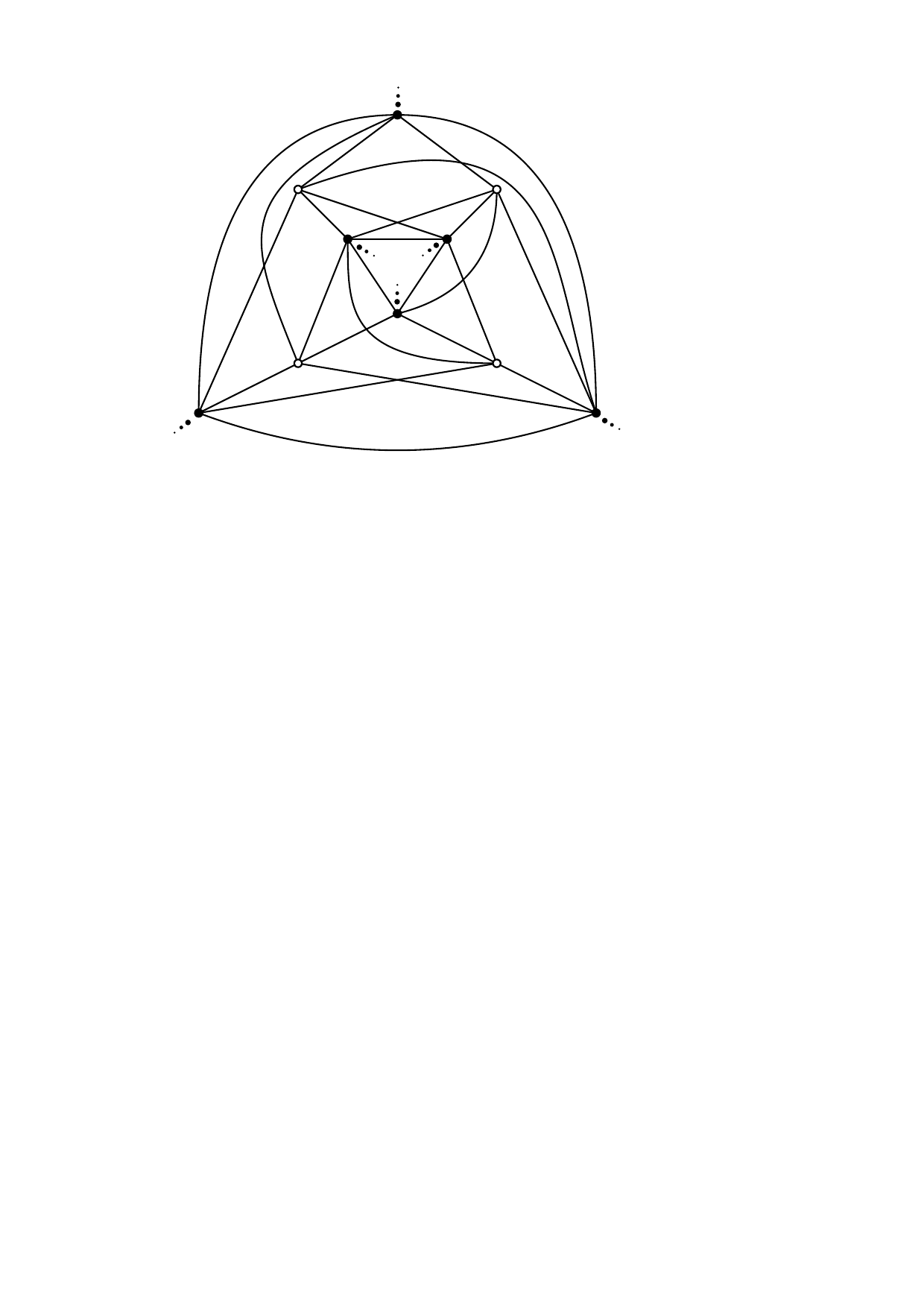}}
    \qquad
\subfloat[\centering ]{
    \includegraphics[angle=-90,scale=0.7,page=5,trim=0 00 0 45,clip]{delta=7.pdf}}%
        \qquad
\subfloat[\centering ]{
    \includegraphics[angle=-90,scale=0.7,page=6,trim=0 00 0 0,clip]{delta=7.pdf}}%
    \caption{The base case and the induction step for building
the graph $M_7^{(k)}$, and combining two copies of $M_7^{(k)}$.   
}
    \label{fig:M7k}
    \label{fig:M7}
\end{figure}

\begin{lemma}
\label{lem:mindeg7}
For any integer $N$,
there exists a simple $1$-planar graph with minimum degree $7$ and $n\geq N$ vertices with an independent set
of size $\tfrac{8}{21}(n-13.5)$.
\end{lemma}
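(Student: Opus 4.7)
The plan is to combine two copies of the inductive graph $M_7^{(\ell)}$ just built, nesting one inside the outer face of the other and then populating the resulting annular region with new independent-set vertices (plus a few auxiliary non-$I$ vertices) so that the low-degree boundary of each copy is brought up to degree $7$. Concretely, each $M_7^{(\ell)}$ has $27\cdot 2^\ell-9$ vertices, an independent set $I^{(\ell)}$ of size $9\cdot 2^\ell-6$ (avoiding the outer face), and an outer face that is a cycle of length $k=9\cdot 2^\ell$ whose vertices are the only ones whose degree differs from $7$ (they have degree $4$). Inverting the $1$-planar drawing of one copy places its outer face on the inside of the other copy's outer face, yielding a $1$-plane graph whose only degree-deficient vertices are the $2k$ vertices on the two nested $k$-cycles, each of which needs exactly three additional incident edges to reach degree $7$.

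In the annular region between the two nested cycles, I would insert $y=6\cdot 2^\ell$ new $I$-vertices and $z=3\cdot 2^\ell$ new non-$I$ vertices, together with edges, such that (i) every vertex on each nested $k$-cycle gains exactly three new incident edges; (ii) every new $I$-vertex has degree exactly $7$ and no neighbor in $I$; (iii) every new non-$I$ vertex has degree at least $7$; and (iv) the resulting drawing is simple and $1$-planar. A natural way to organize the insertion is to cut the annulus into $k/3$ trapezoidal slabs, each spanning three consecutive vertices on each of the two cycles, and to place inside each slab a small gadget made of two new $I$-vertices and one new non-$I$ hub, arranging the edges so that each new $I$-vertex contributes three of the required new edges to outer-face vertices, the hub collects the remaining boundary edges and a few cross-gadget edges to reach degree $7$, and each edge has at most one crossing.

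Given (i)--(iv), a direct count yields
\[
n = 2(27\cdot 2^\ell-9) + y + z = 63\cdot 2^\ell - 18,
\qquad
|I| = 2(9\cdot 2^\ell-6) + y = 24\cdot 2^\ell - 12,
\]
and then $\tfrac{8}{21}(n-13.5) = \tfrac{8}{21}(63\cdot 2^\ell - 31.5) = 24\cdot 2^\ell - 12 = |I|$, as claimed. Choosing $\ell$ large enough ensures $n\ge N$, and the minimum degree is $7$ by construction.

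The main obstacle is specifying the slab gadget so that all of (i)--(iv) hold simultaneously: the degree increments on both cycles must match exactly, the two new $I$-vertices in each slab must find their seven neighbors entirely among boundary vertices and local non-$I$ vertices, and the whole picture must admit a $1$-planar and simple drawing. Once that local gadget is fixed (and verified against a picture in the spirit of \Cref{fig:M7}(c)), the global assembly, the arithmetic above, and the verification of minimum degree $7$ are all routine.
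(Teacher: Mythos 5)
Your global plan is the paper's plan: take two copies of $M_7^{(\ell)}$ whose outer faces together bound one annular face, insert $9\cdot 2^{\ell}$ new vertices there in $3\cdot 2^{\ell}$ groups of three (two in $I$, one not), and conclude with the counts $n=63\cdot 2^{\ell}-18$, $|I|=24\cdot 2^{\ell}-12$ and the identity $\tfrac{8}{21}(n-13.5)=24\cdot 2^{\ell}-12$, all of which are correct. The problem is the one step you explicitly defer, namely the local gadget, and your tentative description of it does not add up. If each new $I$-vertex sends only three edges to the boundary cycles while a central hub ``collects the remaining boundary edges,'' then each slab's $18$ required boundary edge-endpoints are split $3+3+12$, the hub has degree at least $12$ (so it does not need ``a few cross-gadget edges to reach degree $7$''), and, fatally, each new $I$-vertex has degree only $3+1=4$. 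The slack cannot be dumped on the hub: the new $I$-vertices themselves must reach degree $7$, and their only available neighbours are boundary vertices and local non-$I$ vertices.

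The correct distribution (the one the paper uses) is uniform: the three vertices of each group form a path whose two ends are the $I$-vertices, and \emph{every} new vertex is made adjacent to three vertices of $F_\ell$ and three of $F'_\ell$. This gives degree $3+3+1=7$ at the path ends and $3+3+2=8$ at the middle, and it supplies each boundary vertex with exactly three new incident edges, since $9\cdot 2^{\ell}$ new vertices times three edges per side equals $3\,|F_\ell|$. What then remains --- and is the actual crux that neither your sketch nor any purely arithmetic argument settles --- is that this adjacency pattern can be realized by a simple $1$-planar drawing; the paper certifies this only by the explicit picture in Figure~\ref{fig:M7k}(c). So the gap is twofold: the edge distribution you propose fails the degree requirement for the new $I$-vertices, and the $1$-planar realizability of the (corrected) gadget is left unestablished.
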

\begin{proof}
Let $k=\lceil \log_2((N{+}18)/63)\rceil $ and start with two copies of $M_7^{(k)}$, placed
such that the two outer-faces $F_k,F_k'$ of the two copies together bound one face. 
Into this face, insert $9\cdot 2^k$ vertices that we call $U_{k+1}$, 
grouped into $3\cdot 2^k$ paths of three vertices each.  Make  each
vertex of $U_{k+1}$ adjacent
to three vertices each of $F_k$ and $F_k'$;
\Cref{fig:M7k}(c) shows that this can be done while remaining $1$-planar.   

Each vertex of $U_{k+1}$ has at least one neighbour  in $U_{k+1}$,
and three neighbours in $F_k$ and three neighbours in $F_k'$.
Thus the resulting graph $G$ has minimum degree 7.   Define $I$ to consist of the
two independent sets of the two copies of $M_7^{(k)}$ 
as well as the $6\cdot 2^k$ end-vertices of the paths in $U_{k+1}$; this
is an independent set, see Figure~\ref{fig:M7k}(c).

It remains to analyze the size of $G$ and $I$.   Since $G$ contains
two copies of $M_7^{(k)}$, plus $U_{k+1}$, it has
$$n = 2(27 \cdot 2^k - \cdot 9) + 9\cdot 2^k = 63\cdot 2^k-18$$
vertices,   
hence $n\geq N$ by our choice of $k$.
Likewise, $I$ contains two copies of the independent set of
$M_7^{(k)}$, plus the ends of the $3\cdot 2^k$ paths, hence
$$|I|=2(9\cdot 2^k - \cdot 6) + 6\cdot 2^k = 24\cdot 2^k-12.$$
Since $\tfrac{8}{21}(n-13.5)=\tfrac{8}{21}(63\cdot 2^k-18-13.5)=24\cdot 2^k -\tfrac{144}{21}-\tfrac{108}{21}=24\cdot 2^k-12=|I|$, the bound holds.
\end{proof}

With this we have proved the lower-bound entries in~\Cref{ta:overview} for $d=3,\dots,7$.

\section{Optimal 1-planar graphs}\label{optimal}
\label{sec:optimal}

Recall that one of the motivations for our study was the paper by 
Caro and Roditty~\cite{MR0824858}, who showed that a planar graph 
$G$ 
with minimum degree $\delta$ has $\alpha(G)\leq \frac{2n-4}{\delta}$.
We repeat the proof here to observe that 
`minimum degree' is not required (as long as we bound the degree of the vertices in the independent set),
and that the bound also holds for graphs that are
not necessarily simple (as long as 
there are no bigons).

\begin{lemma} (based on {\rm\cite{MR0824858}}) \label{lem:ub:planar}
Let $G$ be a planar bigon-free graph.
Then for any $d$-independent set $I$ (for $d\geq 1$) we have $|I|\leq \tfrac{2}{d}(n-2)$.
\end{lemma}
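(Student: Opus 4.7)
The plan is to adapt the classical Caro--Roditty argument by isolating the bipartite subgraph between $I$ and $\overline{I}$, then bounding its edges via Euler's formula. Concretely, fix a planar drawing of $G$ witnessing that $G$ is bigon-free, and let $G^-$ be the spanning subgraph consisting only of the edges with one endpoint in $I$ and the other in $\overline{I}$. Since $I$ is independent, \emph{every} edge incident to a vertex of $I$ belongs to $G^-$, so in $G^-$ each vertex $t \in I$ still has degree $\deg_G(t) \geq d$.

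Next I would observe that $G^-$ inherits all the structural properties needed for the edge-count bound. It is planar (take the inherited drawing obtained by deleting the edges of $G[\overline{I}]$), it is bipartite by construction, and it is bigon-free (deleting edges from a bigon-free drawing cannot create a bigon). The preliminaries already record that a bipartite bigon-free planar graph on $n \geq 3$ vertices has at most $2n-4$ edges (this is the standard Euler-formula consequence: bipartiteness forces every face to have even length, bigon-freeness rules out length $2$, so every face has length at least $4$, and $2|E| \geq 4|F|$ combined with $n - |E| + |F| = 2$ yields the bound).

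Combining the two inequalities, the edges of $G^-$ contribute at least $d|I|$ when counted from the $I$-side, so
\[
d \cdot |I| \;\leq\; \sum_{t \in I} \deg_{G^-}(t) \;=\; |E(G^-)| \;\leq\; 2n-4,
\]
which immediately gives $|I| \leq \tfrac{2}{d}(n-2)$. The only real subtlety, and the one place the hypotheses do genuine work, is justifying the $2n-4$ edge bound for $G^-$: this is where we need the bigon-free assumption (to exclude $2$-faces that would appear if parallel edges between $I$ and $\overline{I}$ bounded an empty region), and the assumption $d \geq 1$ (to ensure $I$ is nontrivial and the inequality is meaningful). Everything else is a routine transcription of the planar argument.
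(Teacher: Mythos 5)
Your proposal is correct and follows essentially the same route as the paper: delete the edges inside $\overline{I}$ (equivalently, keep only the $I$--$\overline{I}$ edges), note the result is planar, bipartite and still bigon-free, apply the $2n-4$ edge bound from the preliminaries, and count edges from the $I$-side. The only difference is presentational---you spell out the Euler-formula justification and the preservation of bigon-freeness, which the paper asserts in one line.
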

\begin{proof}%
Let $I$ be any $d$-independent set and let $\overline{I}:=V\setminus I$.
Delete any edges between vertices of $\overline{I}$ to obtain a planar
bipartite graph $G'$.  Since $G$ has no bigons, graph $G'$ has no bigons either and therefore it has at most $2n-4$ edges.
Since every vertex in $I$
is incident to at least $d$ of these edges, we have $d|I|\leq 2n-4$
or $|I|\leq \tfrac{2}{d}(n-2)$.
\end{proof}

Caro and Roditty showed this bound is tight for $d\leq 5$ by constructing planar graphs $G$ with minimum degree $d$ and $\alpha(G)=\frac{2n-4}{\delta}$.
(We will show below that Lemma~\ref{lem:ub:planar}
is actually tight for all $d$.)
Inspection of their construction shows that these are
\emph{maximal planar graphs}, i.e., planar graphs that have  the maximum possible number $3n-6$ of edges.

In the same spirit, we ask what the independence number can be for 1-planar graphs that have the maximum possible number of edges.   It is known that every $1$-planar graph has at most $4n-8$ edges, and a bigon-free $1$-planar graph $G$ is called \emph{optimal} if it has exactly $4n-8$ edges.   An optimal $1$-planar graph can equivalently be defined as the graph obtained by taking
a \emph{quadrangulated graph} $Q$ (i.e., a graph with a planar drawing where all faces are bounded by 4-cycles)
and inserting a pair of crossing edges into each face.   Numerous results
are known for optimal 1-planar graphs, see \cite{BSW84}. In particular, 
an optimal 1-planar graph has
$n-2$ pairs of crossing edges.

\begin{lemma}
\label{lem:ub:optimal}
Let $G$ be an optimal $1$-planar graph.
Then for any $d$-independent set $I$ (for $d\geq 3$) we have $|I|\leq \tfrac{2}{d}(n-2)$.
\end{lemma}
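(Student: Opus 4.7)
The plan is to exploit the characterization of optimal $1$-planar graphs recalled in the paper: $G$ is obtained from a quadrangulated graph $Q$ by inserting a pair of crossing diagonals in every face of $Q$. In particular, each face of $Q$ has four corners, and all six pairs among these corners are adjacent in $G$ (four sides plus two diagonals together form a $K_4$). By Euler's formula, $Q$ has $n-2$ faces and $2n-4$ edges, with every edge of $G$ being either a side of $Q$ or a diagonal inside some unique face.

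First I would observe that each face of $Q$ contains at most one vertex of $I$, since any two corners of a face are adjacent in $G$. Let $f_1$ denote the number of faces containing exactly one vertex of $I$; then $f_1 \leq n-2$. Since $I$ is independent in $G$ (hence also in $Q$), every edge of $G$ incident to $I$ has exactly one endpoint in $I$, so $\sum_{v\in I}\deg_G(v)$ equals the total number of edges of $G$ incident to $I$.

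Next I would count this total by double-counting over the faces of $Q$. For each face $F$ with $I$-vertex $v$, exactly three edges of $F$ are incident to $v$: the two quadrilateral sides at $v$ and the diagonal from $v$ to the opposite corner. Summing over all faces yields $3f_1$ (face, edge) incidences. Each diagonal lies in exactly one face, so each diagonal incident to $I$ is counted once; each side lies in two faces, and if it is incident to some $u\in I$ then $u$ must be the (unique) $I$-vertex of both adjacent faces, so such a side is counted twice. Writing $e_Q$ and $e_\times$ for the number of sides and diagonals incident to $I$, and noting that $e_\times = f_1$ (one diagonal per face with an $I$-vertex), we obtain $3f_1 = 2e_Q + e_\times = 2e_Q + f_1$, hence $e_Q = f_1$ and the total number of edges incident to $I$ is $2f_1$.

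Finally, since each vertex of $I$ has degree at least $d$, we get $d|I|\leq 2f_1 \leq 2(n-2)$, yielding the claim. The delicate point, and the only real obstacle in the argument, is the double-counting step: it is essential to recognize that a side-edge incident to $I$ is automatically shared between two faces having the \emph{same} $I$-corner, so such a side is counted twice in the face-sum; this is what upgrades the naive bound of $3f_1$ to $2f_1$ and produces the factor $\tfrac{2}{d}$ rather than $\tfrac{3}{d}$ in the conclusion.
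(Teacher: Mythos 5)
Your proof is correct. It rests on the same two pillars as the paper's own argument: the four corners of each face of $Q$ (equivalently, the four endpoints of each crossing) induce a $K_4$ in $G$ and hence contain at most one vertex of $I$, and there are exactly $n-2$ such faces/crossings. Where you genuinely diverge is in how you establish that a vertex $t\in I$ has half of its incident edges crossed. The paper simply cites the structural fact from \cite{BSW84} that in an optimal $1$-planar drawing the crossed and uncrossed edges alternate in the cyclic order around every vertex, and then charges the resulting $\geq d/2$ crossings at $t$ to $t$. You instead derive the aggregate version of this fact ($e_Q=e_\times=f_1$, so the edges incident to $I$ number exactly $2f_1$) by double-counting face--edge incidences, the key observation being that a side of $Q$ incident to some $u\in I$ lies on two faces that both have $u$ as their unique $I$-corner and is therefore counted twice. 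Your route is self-contained --- it needs only the quadrangulation-plus-diagonals characterization and no imported alternation property --- at the cost of a slightly more delicate count; the paper's version is shorter but leans on an external structural result about optimal $1$-planar drawings.
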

\begin{proof}
Fix an arbitrary vertex $t\in I$; we know $\deg(t)\geq d$.   In any
$1$-planar drawing of $G$, the cyclic order of edges around $t$ alternates
between uncrossed and crossed edges, see \cite{BSW84}.  
Therefore half of
the incident edges of $t$ are crossed, and we assign all these crossings to $t$.
This does not double-count crossings, because (in an optimal $1$-planar
graph) the four endpoints of a crossing induce $K_4$ and so at most one
of them can belong to $I$.    We assigned at least $d/2$ crossings to
every vertex in $I$, and there are exactly $n{-}2$ crossings, so $|I|\leq \tfrac{2}{d}(n-2)$.
\end{proof}

We now construct bigon-free optimal $1$-planar graphs for which this bound is tight.
Note that in an optimal $1$-planar graph all vertex-degrees are even, so we will
only consider even values of $d$.   We first need 
to construct some quadrangulated graphs to 
show that the bound of \Cref{lem:ub:planar}
is tight. 

\begin{theorem}
\label{thm:planar}
For $d\geq 2$, the $d$-independence number of simple planar quadrangulated graphs is exactly $\tfrac{2}{d}(n-2)$.
\end{theorem}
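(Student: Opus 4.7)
The upper bound would follow immediately from Lemma~\ref{lem:ub:planar}, since every quadrangulation is a planar bigon-free graph. For the lower bound I would construct, for each $d\geq 2$ and arbitrarily large $n$ of appropriate parity, simple planar quadrangulations with $d$-independent sets of size exactly $\tfrac{2}{d}(n-2)$.

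The first step is a structural observation: equality in Lemma~\ref{lem:ub:planar} forces $G$ to be bipartite with bipartition $(I,\overline{I})$ and every vertex of $I$ to have degree exactly $d$. So the plan is to build planar bipartite quadrangulations whose $I$-side is $d$-regular. The natural starting point is $K_{2,m}$, itself a bipartite planar quadrangulation with ``poles'' $c_1,c_2\in\overline{I}$ and ``equatorial'' vertices $t_1,\dots,t_m\in I$; its $m$ faces are the 4-cycles $c_1 t_i c_2 t_{i+1}$, and each $t_i$ currently has degree only $2$.

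The key operation to raise these degrees is a simple insertion: inside any 4-face whose boundary $c\,t_i\,c'\,t_{i+1}$ alternates between $\overline{I}$ and $I$, place a new vertex $w\in\overline{I}$ adjacent to $t_i$ and $t_{i+1}$. This splits the face into two 4-cycles, $c\,t_i\,w\,t_{i+1}$ and $c'\,t_{i+1}\,w\,t_i$; it preserves planarity and simplicity, leaves the $\overline{I}$-degrees alone, and adds exactly $1$ to each of $\deg(t_i)$ and $\deg(t_{i+1})$. Since the two new faces again have a pair of consecutive $I$-vertices on their boundaries, the move can be iterated to stack any number of degree-$2$ $\overline{I}$-vertices into the same gap between $t_i$ and $t_{i+1}$.

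For even $d=2k$ I would take $m=s\geq 2$ and perform $k-1$ such insertions inside each face of $K_{2,s}$, so that every $t_i$ lies in two faces and gains $2(k-1)$ extra neighbors, reaching degree $d$. For odd $d=2k+1$ the equation $(d-2)|I|=2|\overline{I}|-4$ forces $|I|$ to be even, so I would take $m=2s$ and perform $2k-1$ insertions in every other face (namely the $s$ faces $c_1 t_{2j-1} c_2 t_{2j}$ for $j=1,\dots,s$), so each $t_i$ lies in exactly one modified face and gains $2k-1$ extra neighbors. A direct count in both cases gives $n=sd/2+2$ (even case) or $n=sd+2$ (odd case), with $|I|=\tfrac{2}{d}(n-2)$. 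I do not foresee any real obstacle: the insertion move transparently preserves all required properties, and the only slightly subtle point is accommodating the parity constraint in the odd case.
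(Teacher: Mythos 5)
Your proof is correct and takes essentially the same route as the paper: both start from $K_{2,m}$ and raise the degrees of the equatorial $I$-vertices by stacking new degree-$2$ vertices of $\overline{I}$ adjacent to consecutive pairs $t_i,t_{i+1}$, which is exactly the paper's insertion of $K_{2,d-2}$ at a planar pairing via Claim~\ref{cl:pairing}. Your ``odd $d$'' variant is literally the paper's construction (the paper handles all $d$ uniformly by taking $|I|$ even), and your ``every face'' variant for even $d$ is only a cosmetic difference.
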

\begin{proof}
The upper bound holds by \Cref{lem:ub:planar}.   For the lower bound, consider first the graph $K_{2,n}$, 
which is simple planar quadrangulated and has a 2-independent set $I'$ of size $n-2=\tfrac{2}{d'}(n-2)$
for $d'=2$.     Furthermore, if we choose $n$ even then $I'$ has a planar pairing.   Applying Claim~\ref{cl:pairing}
with $A=B=0$ we hence can obtain a graph that has a $d$-independent set of size $\tfrac{2}{d}(n-2)$ for all $d\geq 2$,
and inspection of the construction (which inserts $K_{2,x}$
for $x=d-2$ in place of each edge of the planar pairing)
one easily verifies that the resulting graph is in fact simple, planar and quadrangulated.
\end{proof}

\begin{theorem}
\label{thm:optimal}
For $d\geq 2$, the $2d$-independence number of bigon-free optimal 1-planar graphs is exactly $\tfrac{2}{2d}(n-2)$.
\end{theorem}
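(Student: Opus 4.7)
My plan is to prove the upper bound by a direct application of \Cref{lem:ub:optimal} with $d$ replaced by $2d$, giving $|I|\le \tfrac{2}{2d}(n-2)$. The work is in the lower bound, where my approach is to recycle the planar quadrangulation built in the proof of \Cref{thm:planar} and convert it to an optimal 1-planar graph by inserting both crossing diagonals into every face.

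First I will fix an even integer $m\ge 2$ (a scaling parameter) and let $Q$ be the planar quadrangulation obtained as in the proof of \Cref{thm:planar}: start from $K_{2,m}$ on vertices $u,v,w_1,\dots,w_m$, use the planar pairing that pairs $w_{2k-1}$ with $w_{2k}$ inside the face $\{u,w_{2k-1},v,w_{2k}\}$, and insert a copy of $K_{2,d-2}$ (with new vertices $y_1^k,\dots,y_{d-2}^k$) at each such pair. Then $Q$ has $n=2+md/2$ vertices and every $w_i$ has degree exactly $d$. Rather than taking the full set $\{w_1,\dots,w_m\}$ as the independent set (which has size $2(n-2)/d$, too large to survive adding the crossings), I will take only $I:=\{w_{2k-1}:1\le k\le m/2\}$, so that $|I|=m/2=\tfrac{1}{d}(n-2)$.

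The key observation is that every face of $Q$ contains exactly one vertex of $I$: any sub-face inside a modified face $\{u,w_{2k-1},v,w_{2k}\}$ has both $w_{2k-1}\in I$ and $w_{2k}\notin I$, while every unmodified face $\{u,w_{2k},v,w_{2k+1}\}$ has $w_{2k}\notin I$ and $w_{2k+1}\in I$. When I form the optimal 1-planar graph $G$ by inserting the two crossing diagonals into each quadrangular face of $Q$, no crossing edge can join two vertices of $I$, so $I$ remains independent in $G$. Moreover, each vertex of $I$ has $d$ incident faces in $Q$ and gains one crossing edge per face, so its degree rises from $d$ to $2d$, as required.

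The main obstacle I anticipate is verifying that $G$ is bigon-free. Since $Q$ is bipartite with parts $\{u,v\}\cup\{y_j^k\}$ and $\{w_i\}$, the two opposite-corner pairs of any face of $Q$ each lie in a common part; I will check directly that each such candidate pair (namely $\{u,y_1^k\}$, $\{w_{2k-1},w_{2k}\}$, $\{y_j^k,y_{j+1}^k\}$, $\{y_{d-2}^k,v\}$, and $\{u,v\}$) is non-adjacent in $Q$, so no crossing diagonal duplicates an existing $Q$-edge. Parallel crossing edges will arise (for example, the diagonal $(u,v)$ is inserted inside each of the $m/2$ unmodified $K_{2,m}$-faces), but each such edge carries a genuine crossing and so cannot bound a bigon; hence $G$ has no bigons.
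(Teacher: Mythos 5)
Your proof is correct, and your lower-bound construction takes a genuinely different route from the paper's, even though both arguments obtain the upper bound from \Cref{lem:ub:optimal} and both start from the quadrangulation of \Cref{thm:planar}. The paper converts that quadrangulation $Q_d$ into an optimal $1$-planar graph by adding the dual graph $Q_d^*$ and joining each dual vertex to the four vertices of its face: all new vertices lie in $\overline{I}$, so the independent set survives unchanged while $n$ grows to $2|V(Q_d)|-2$, and the ratio falls from $\tfrac{2}{d}$ to $\tfrac{1}{d}$ purely through this doubling of $n$. You instead insert the two crossing diagonals directly into each face of $Q$, which keeps $n$ fixed but forces you to discard half of the natural independent set, since otherwise a diagonal such as $(w_{2k},w_{2k+1})$ would join two independent-set vertices that are opposite corners of a face; restricting to the odd-indexed $w_i$ fixes this, and your observation that every face then contains exactly one $I$-vertex is the right invariant to check. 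The two routes reach the same ratio $\tfrac{1}{d}(n-2)$ from opposite directions (paper: same $|I|$, larger $n$; yours: same $n$, smaller $|I|$). Your version is more economical and directly instantiates the paper's own characterization of optimal $1$-planar graphs as quadrangulations with a crossing pair inserted into each face, at the cost of the extra independence and degree bookkeeping; the paper's version avoids that case analysis entirely because the new neighbours are all dual vertices. Your bigon-freeness argument is also sound and matches the paper's definition: the only uncrossed edges of the final graph are the edges of the simple graph $Q$, and every inserted diagonal carries a crossing, so no cell can be bounded by two uncrossed parallel edges.
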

\begin{proof}
The upper bound holds by \Cref{lem:ub:optimal}.   For the lower bound, we know from Theorem~\ref{thm:planar} that
there exists a simple planar quadrangulated graph $Q_d$ with an independent set $I_d$ of size $\tfrac{2}{d}(|V(Q_d)|-2)$.
Now obtain graph $O_d$ by adding the dual graph $Q_d^*$ to $Q_d$ and connecting every dual vertex $v_F$ of $Q_d^*$ 
to all vertices of the face $F$ of $Q_d$ that $v_F$ represents.   
It is well-known that this gives a bigon-free optimal $1$-planar graph, see also Figure~\ref{fig:optimal}.

Since $Q_d$ is quadrangulated, it has exactly $|V(Q_d)|-2$ faces.    Therefore $O_d$ has $n=2|V(Q_d)|-2$ vertices
and the independent set $I_d$ of $Q_d$ is also an independent set of $O_d$ and has size
$|I_d|=\tfrac{1}{d}(2|V(Q_d)|-4)=\tfrac{1}{d}(n-2)$ as desired.
\end{proof}

\begin{figure}[ht]
\centering
\subfloat[\centering ]{ \includegraphics[scale=0.7,page=2,trim=30 0 50 0,clip]{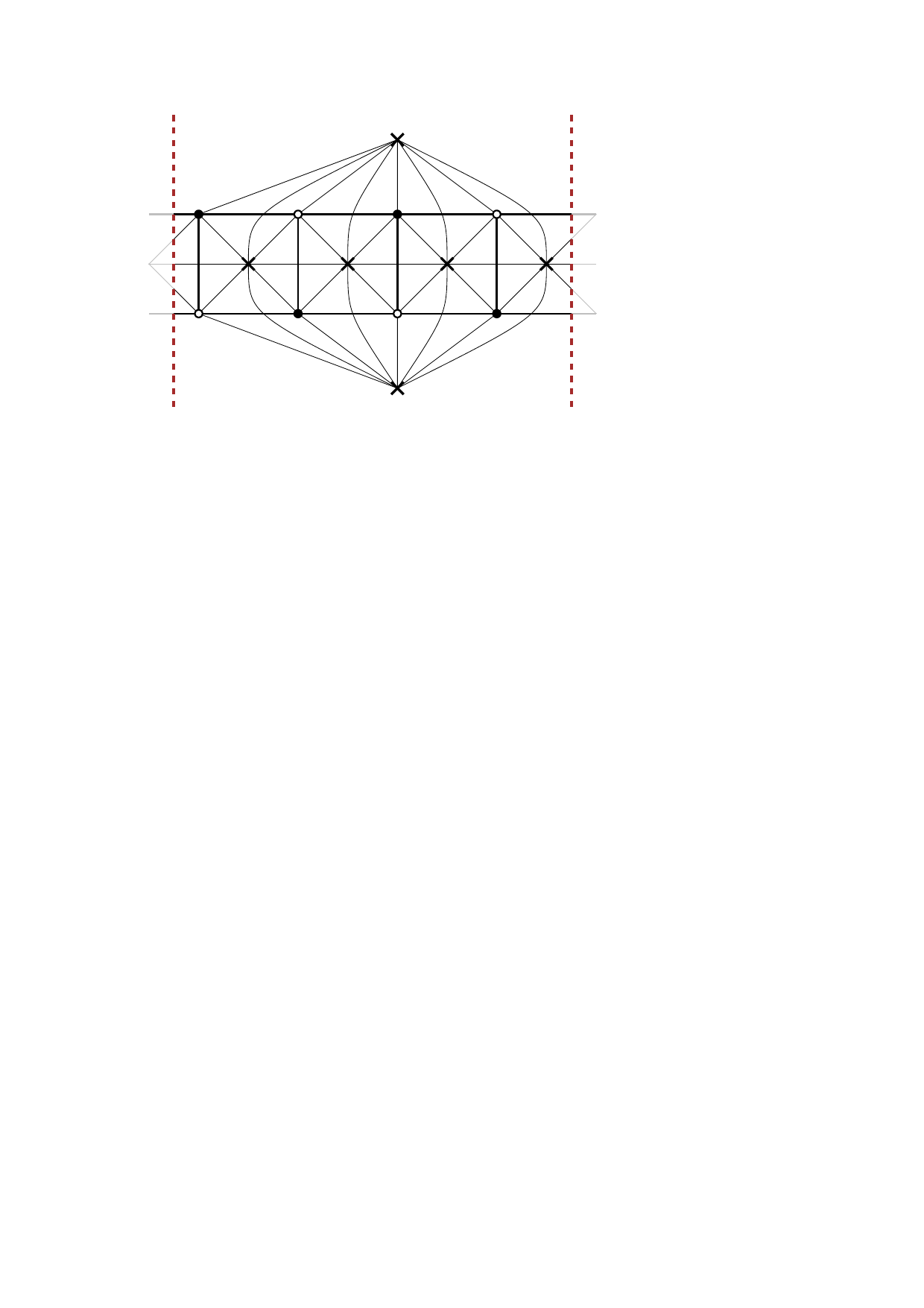}}%
    \qquad
\subfloat[\centering ]{ \includegraphics[scale=0.7,page=3,trim=0 00 0 0,clip]{optimal.pdf}}%
\caption{(a) The graph $K_{2,n}=:Q_2$ (black); after inserting $K_{2,d-2}$ (here $d=5$) at a planar pairing it has a $d$-independent set of size $\tfrac{2}{d}(n-2)$.   (b) The optimal 1-planar graph $O_2$ corresponding to $Q_2$; it has a 4-independent set of size $\tfrac{1}{2}(n-2)$.}
\label{fig:optimal}
\end{figure}

The optimal 1-planar graphs $O_d$ were constructed via the planar quadrangulated graphs $Q_d$, which have vertices of
degree 2.   At each such vertex, the dual graph has two parallel edges, which means that $O_d$ is not simple for
any $d\geq 2$.   We briefly sketch here that for $3\leq d\leq 5$ we can actually  construct simple optimal
1-planar graphs that achieve the bound on the $d$-independence number.   To this end, it suffices to construct
planar quadrangulated graphs $\hat{Q}_d$ that are 3-connected.    We can do this with the standard construction, after omitting the edges within the nested cycles, 
as follows:
\begin{itemize}
\item For $d=3$, we use $k=4, \tau=8$, and insert two vertices of the independent set into each end face.
\item For $d=4$, we use $k=4, \tau=4$, and insert one vertex of the independent set into each end face.
\item For $d=5$, we alternate the length of the nested cycles between 5 and 10 (beginning and ending with 5).
	We use $\tau=4$, and insert one vertex of the independent set into each end face.
\end{itemize}
See also Figure~\ref{fig:planar:3conn}.   One easily verifies that the independent sets of these graphs 
has size $\tfrac{2}{d}(n-2)$ and the graphs are planar, quadrangulated and 3-connected.   Combining these
graphs with their duals as in the proof of \Cref{thm:optimal} therefore gives simple optimal 1-planar graphs
with a $(2d)$-independent set of size $\tfrac{1}{d}(n-2)$ for $d\in \{3,4,5\}$.

\begin{figure}[ht]
\centering
\subfloat[\centering ]{{\includegraphics[page=1,scale=0.7]{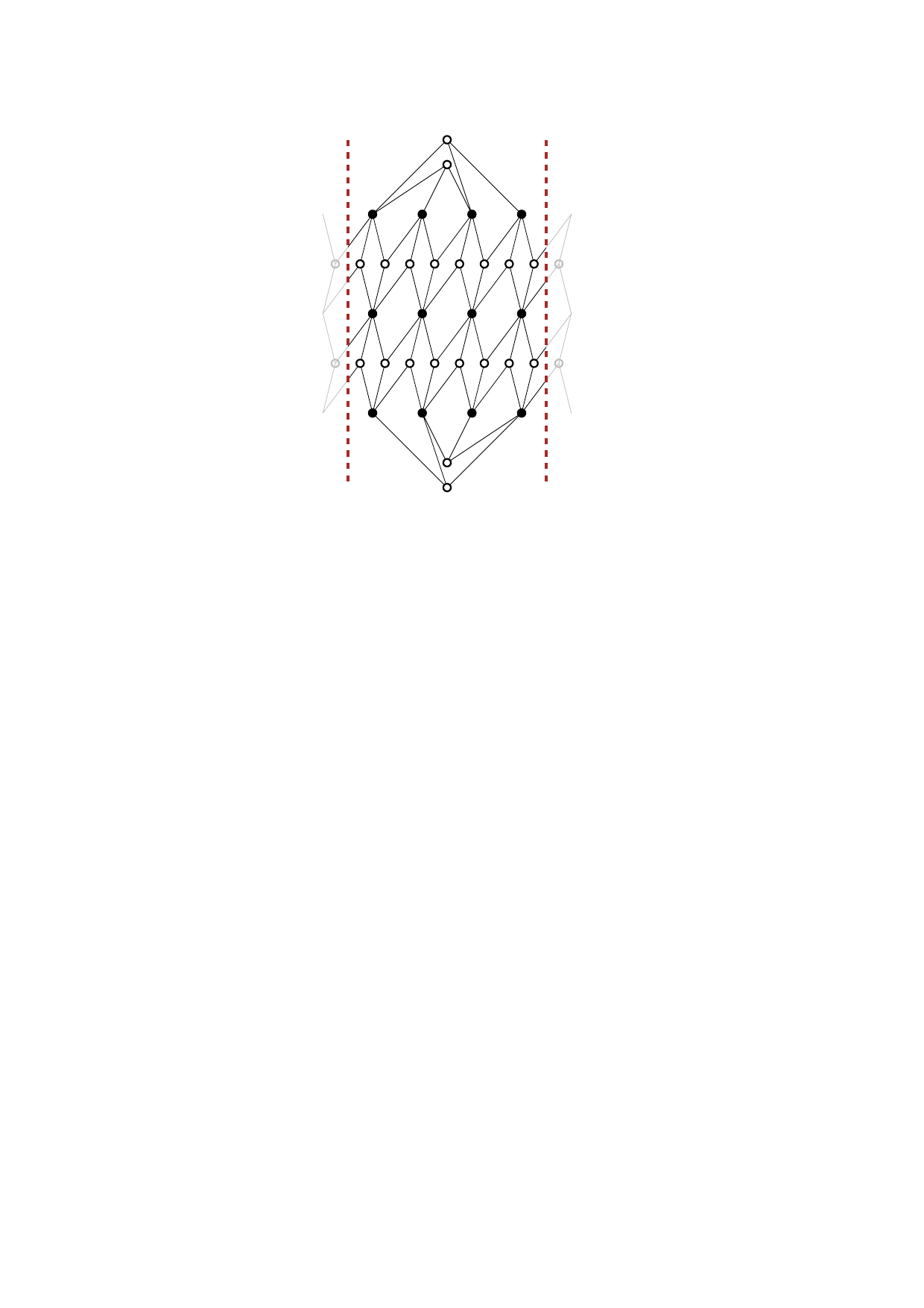}}}
\qquad
\subfloat[\centering ]{{\includegraphics[page=2,scale=0.7]{planar.pdf}}}
\qquad
\subfloat[\centering ]{{\includegraphics[page=3,scale=0.7]{planar.pdf}}}
\caption{Planar quadrangulated graphs with a $d$-independent set of size $\tfrac{2}{d}(n-2)$, for $d=3,4,5$.}
\label{fig:planar:3conn}
\end{figure}

\section{Open problems}

In this paper, we studied the $d$-independence number of 1-planar graphs, both in the setting
where the entire graph has to have minimum degree $d$ and in the setting where only
the vertices of the independent set have to have degree $d$ or more.   We provide
both lower and upper bounds, which for small values of $d$ are matching.   We leave
a few open problems:

\begin{itemize}
\item For the $d$-independence number for simple graphs, we leave significant gaps between
lower and upper bounds for $d\geq 8$.   We suspect that in particular the upper bound
(i.e., Lemma~\ref{lem:IS7}) could be improved further for $d\geq 8$ by exploiting that the 
graph is simple.  Doing such an improvement would give an interesting insight into the structure of the neighbourhood
of vertices of high degree in simpler 1-planar graphs.
\item In a similar spirit, can we strengthen the bound of Lemma~\ref{lem:IS7}
if we know that \emph{all} vertices (not just those in the independent set $I$)
have a degree of at least 7?
\item What is the $d$-independence number of other classes of near-planar graphs?    One
could ask this question both for generalizations of 1-planar graphs (such as 2-planar graphs or
fan-planar graphs) as well as subclasses of 1-planar graphs such as IC-planar graphs.
\item Last but not least, it would be interesting to explore algorithmic questions around finding independent sets of a certain size.   For example, it is easy to find an independent set of size $\tfrac{n}{8}$ in any 1-planar graph (because they are 7-degenerate and so can be 8-coloured in linear time).   With more effort, we can even 7-color the graph in linear time, so find an independent set of size at least $\tfrac{n}{7}$ \cite{CK05}.    
But can we find, say, a $6$-independent set of size $\tfrac{n}{3}-O(1)$ in an optimal 1-planar graph efficiently?   
\end{itemize}

\bibliographystyle{plain}
\bibliography{refs}

\end{document}